\title{Persistence stability for geometric complexes}
\author{
Fr\'{e}d\'{e}ric Chazal\thanks{\url{frederic.chazal@inria.fr}},
Vin de Silva\thanks{\url{Vin.deSilva@pomona.edu}},
Steve Oudot\thanks{\url{steve.oudot@inria.fr}}}
\date{November 11, 2013}
\numberwithin{equation}{section}
\theoremstyle{plain}
	\newtheorem{theorem}[equation]{Theorem}
	\newtheorem{lemma}[equation]{Lemma}
	\newtheorem{proposition}[equation]{Proposition}
	\newtheorem{corollary}[equation]{Corollary}
\theoremstyle{definition}
	\newtheorem{definition}[equation]{Definition}
	\newtheorem{example}[equation]{Example}
	\newtheorem*{example*}{Example}
	\newtheorem{remark}[equation]{Remark}
\newcommand{\half}{\tfrac{1}{2}}
\newcommand{\twothirds}{\tfrac{2}{3}}
\newcommand{\quarter}{\tfrac{1}{4}}
\newcommand{\ringfont}{\mathbf}
	\newcommand{\Rr}{{\ringfont{R}}}
	\newcommand{\Aa}{\ringfont{A}}
	\newcommand{\kk}{\mathbf{k}}
\newcommand{\Hgr}{\operatorname{H}}
\newcommand{\Hom}{\operatorname{Hom}}
\newcommand{\pmfont}{\mathbb}
	\newcommand{\Ss}{{\pmfont{S}}}
	\newcommand{\Tt}{{\pmfont{T}}}
	\newcommand{\Uu}{{\pmfont{U}}}
	\newcommand{\Vv}{{\pmfont{V}}}
\newcommand{\dgm}{\mathsf{dgm}}
\newcommand{\metricfont}{\mathrm}
	\newcommand{\bottle}{\metricfont{d_b}}
	\newcommand{\gh}{\metricfont{d_{GH}}}
	\newcommand{\haus}{\metricfont{d_H}}
\newcommand{\rank}{\operatorname{\mathrm{rank}}}
\newcommand{\rips}{\operatorname{\mathrm{Rips}}}
\newcommand{\cech}{\operatorname{\mathrm{\check{C}ech}}}
\newcommand{\wit}{\operatorname{\mathrm{Wit}}}
\newcommand{\dowk}{\operatorname{\mathrm{Dow}}}
\newcommand{\Rips}{\operatorname{\mathbb{R}\mathrm{ips}}}
\newcommand{\Cech}{\operatorname{\mathbb{\check{C}}\mathrm{ech}}}
\newcommand{\Wit}{\operatorname{\mathbb{W}\mathrm{it}}}
\newcommand{\Dowk}{\operatorname{\mathbb{D}\mathrm{ow}}}
\newcommand{\qlen}{1em}
\newcommand{\qem}{\makebox[\qlen]{---}}
\newcommand{\qon}[1]{\makebox[\qlen]{$_{\phantom{}}\bullet_{#1}$}}
\newcommand{\qoff}[1]{\makebox[\qlen]{$_{\phantom{}}\circ_{#1}$}}
\newcommand{\e}{\varepsilon}
\newcommand{\one}{\mathbbm{1}}
\newcommand{\dis}{\operatorname{dis}}
\newcommand{\vr}{Vietoris--Rips}
\newcommand{\vc}{\v{C}ech}
\begin{document}

\maketitle

\begin{abstract}
In this paper we study the properties of the homology of different geometric filtered complexes (such as \vr, \vc\ and witness complexes) built on top of totally bounded metric spaces. Using recent developments in the theory of topological persistence,
we provide simple and natural proofs of the stability of the persistent homology of such complexes with respect to the Gromov--Hausdorff distance. We also exhibit a few noteworthy properties of the homology of the Rips and \vc\ complexes built on top of compact spaces.
\end{abstract}

\section{Introduction}
\label{sec:introduction}

The inference of topological properties of metric spaces from approximations is a problem that has attracted special attention in computational topology in recent years.
Given a metric space $(Y,d_Y)$ approximating an
unknown metric space $(X, d_X)$, the aim is to build a simplicial
complex on the vertex set~$Y$ whose homology or homotopy type is the same
as~$X$. Note that, although $Y$ is finite in many applications, finiteness is not a requirement {a~priori}.

Among the many geometric complexes available to us, the \vr\ complex (or simply `Rips complex') is particularly useful, being easy to compute and having good approximation properties. We recall the definition. Let $(X,d_x)$ be a metric space and $\alpha$ a real parameter (the `scale'). Then $\rips(X,\alpha)$ is the simplical complex on~$X$ whose simplices are the finite subsets of~$X$ with diameter at most~$\alpha$:
\[
\sigma = [x_0, x_1, \dots, x_k] \in \rips(X,\alpha)
\; \Leftrightarrow \;
d_X(x_i,x_j) \leq \alpha \;\; \text{for all $i, j$}
\]

When $(X,d_X)$ is a closed Riemannian manifold,
J.-C.~Hausmann \cite{h-ovrcc-95} proved that if $\alpha >0$ is
sufficiently small then the geometric realisation of $\rips(X,\alpha)$
is homotopy equivalent to $X$. This result was later generalised by
J.~Latschev~\cite{l-vrcms-01}, who proved that if $(Y,d_Y)$ is sufficiently close to
$(X,d_X)$ in the Gromov--Hausdorff distance, then there exists
$\alpha>0$ such that $\rips(Y,\alpha)$ is homotopy equivalent to
$X$. Recently, Attali et~al.~\cite{als-vrcap-11} adapted these results
to a class of sufficiently regular compact subsets of euclidean
spaces. For larger classes of compact subsets of Riemannian manifolds,
the homology and homotopy of such sets are known to be encoded in
nested pairs of \vr\ complexes \cite{co-tpbre-08}, yet it
remains still open whether or not a single Rips complex can carry this
topological information.

These approaches make it possible to recover the topology of a metric space $(X,d_X)$ from a sufficiently close approximation $(Y,d_Y)$, provided that the parameter~$a$ is chosen correctly. Unfortunately, this choice very much depends on the geometry of~$X$ and can be difficult (if even possible) to determine in practical applications.
%
%
One way round this issue is to use topological persistence~\cite{elz-tps-02,zc-cph-05}, which encodes the homology of the entire nested family $\Rips(X)=( \rips(X,\alpha) )_{\alpha\in\Rr}$ in a single invariant, the persistence diagram.
Relevant scales~$\alpha$ can then be selected by the user, and
the diagram provides an explicit relationship between the
choice of a scale~$\alpha$ and the homology of the corresponding \vr\
complex. 

The stability of this construction was established by Chazal et
al.~\cite{ccggo-ppmtd-09}, who proved that
\begin{equation}
\tag{*}
\bottle(\dgm (\Rips(X)), \dgm (\Rips(Y)))\leq 2\,\gh(X,Y)
\end{equation}
for finite metric spaces $(X,d_X)$ and $(Y,d_Y)$. Here $\bottle$ and~$\gh$ denote the bottleneck~\cite[Chap. 8]{e-cti-10} and Gromov--Hausdorff distances, respectively. The bound turns out to be tight, which motivates the use of persistence diagrams as discriminative signatures to compare geometric shapes represented as finite metric spaces.

In this paper we show that the same inequality holds for all
totally bounded metric spaces, and can in fact be extended to a larger
class of filtered geometric complexes on such spaces. 
This includes a new family of examples called {Dowker complexes}.
Our analysis adopts a new perspective, guided by recent developments in the theory of topological and algebraic persistence~\cite{cdsgo-sspm-12} which result in simple and natural proofs. Our contributions are the following:
\begin{itemize}
\item Extending the concept of simplicial map between complexes to the
  one of $\e$-simplicial multivalued map between filtered complexes,
  we show that such maps induce canonical $\e$-interleavings between
  the persistent homology modules of these complexes
  --- Section~\ref{sec:multivalued-maps}.
\item Applying this result to correspondences between metric spaces,
  we establish the $\e$-interleaving of the persistent homology modules of
  certain families of filtered geometric complexes (including Rips
  filtrations) built on top of $\e$-close metric spaces --- Section~\ref{sec:correspondences}.
\item We prove the tameness of the persistent homology modules of the
  above filtered complexes when the vertex sets are
  totally bounded. Combined with the previous results, this result shows that
  inequality~(*) can be generalised as claimed above
  --- Section~\ref{subsec:precompact}.
\end{itemize}
In addition to this consistent set of results, the
Section~\ref{subsec:nonpersistent} presents a few noteworthy properties of the homology groups of Rips and \vc\ complexes of totally bounded metric spaces.
We finish in Section~6 with some results on the persistence diagrams of path-metric and $\delta$-hyperbolic metric spaces.

\begin{remark}
Why total boundedness?
Recall that a metric space is totally bounded if for every $\e > 0$ it admits a finite $\e$-sample. In other words, such a space is 
approximable at every resolution by a finite metric space. This explains the good behaviour of these spaces with respect to persistent homology; it is a manifestation of the good behaviour of persistent homology with respect to approximations.
\end{remark}



\section{Persistence modules and persistence diagrams}
\label{sec:persmod}

We adopt the approach and the notation of \cite{cdsgo-sspm-12}. In this section we recall the definitions and results that we need. For a detailed presentation the reader is referred to \cite{cdsgo-sspm-12}.

A {persistence module} $\Vv$ over the real numbers~$\Rr$ is an indexed family of vector spaces\footnote{%
All vector spaces are taken to be over an arbitrary field~$\kk$, fixed throughout this paper.
}
$
(V_a )_{a \in \Rr}
$
together with a doubly-indexed family of linear maps 
$
 \left( v_a^b: V_a \to V_b \mid a \leq b \right)
$
which satisfy the composition law
$
v_b^c \circ v_a^b = v_a^c
$
whenever $a \leq b \leq c$, and where $v_a^a$ is the identity map on $V_a$.

\begin{example}[homology of a filtered complex]
\label{example:filtered-complex-persistence}
This is the standard example, which we use throughout this paper.
Let $\Ss$ be a filtered simplicial complex: that is, a family $(\Ss_a)_{a \in \Rr}$ of subcomplexes of some fixed simplicial complex~$\overline\Ss$, such that $\Ss_a \subseteq \Ss_b$ whenever $a \leq b$.
Let $V_a = \Hgr(\Ss_a)$ be the homology group\footnote{%
We use simplicial homology with coefficients in the field~$\kk$.}
of~$\Ss_a$, and let $v_a^b: \Hgr(\Ss_a) \to \Hgr(\Ss_b)$ be the linear map induced by the inclusion $\Ss_a \hookrightarrow \Ss_b$.
Since, for any $a \leq b \leq c$, the inclusion $\Ss_a \hookrightarrow \Ss_c$ is the composition of the inclusions $\Ss_a \hookrightarrow \Ss_b$ and $\Ss_b \hookrightarrow \Ss_c$, it follows by functoriality that the linear maps satisfy $v_a^c = v_b^c \circ v_a^b$ and the family $( \Hgr(\Ss_a) )_{a \in \Rr}$ is a persistence module. 
\end{example}

Let $\Uu, \Vv$ be persistence modules over~$\Rr$, and let $\e$ be any real number. A {\bf homomorphism of degree~$\e$} is a collection $\Phi$ of linear maps
\[
\left( \phi_a : U_a \to V_{a+\e} \right)_{a \in \Rr}
\]
such that $v_{a+\e}^{b+\e} \circ \phi_a = \phi_b \circ u_a^b$ for all $a \leq b$.
We write
\begin{align*}
\Hom^\e(\Uu, \Vv) &= \{ \text{homomorphisms $\Uu \to \Vv$ of degree~$\e$} \}.
\end{align*}
Composition is defined in the obvious way.
For $\e \geq 0$, the most important degree-$\e$ endomorphism is the shift map
\[
1_\Vv^\e \in \Hom^\e(\Vv, \Vv)
\]
defined to be the collection of maps $( v_a^{a+\e} )$ from the persistence structure on~$\Vv$.
If $\Phi$ is a homomorphism $\Uu \to \Vv$ of any degree, then by definition $\Phi 1_\Uu^\e = 1_\Vv^\e \Phi$ for all $\e \geq 0$.

\begin{example}[continuing Example~\ref{example:filtered-complex-persistence}]
Given $\e >0$, if $f : \overline{\Ss} \to \overline{\Ss}{}'$ is a simplicial map such that $f$ maps $\Ss_a$ to~$\Ss'_{a + \e}$ for any $a\in \Rr$, then $f$ induces a homomorphism of degree~$\e$ between the persistence modules $\Hgr(\Ss)$ and $\Hgr(\Ss')$.
\end{example}

 Two persistence modules $\Uu, \Vv$ are said to be {\bf $\e$-interleaved} if there are maps
\[
\Phi \in \Hom^\e(\Uu, \Vv),
\quad
\Psi \in \Hom^\e(\Vv, \Uu)
\]
such that $\Psi \Phi = 1_\Uu^{2\e}$ and $\Phi \Psi = 1_\Vv^{2\e}$.

Following \cite{ccggo-ppmtd-09,cdsgo-sspm-12} we say that a persistence module $\Vv$ is {\bf q-tame} if
\[
\rank(v_a^b) < \infty
\quad
\text{whenever $a < b$}.
\]
This regularity condition ensures that persistence modules behave well:
\begin{theorem}[\cite{cdsgo-sspm-12}] 
\label{thm:stability2} 
If $\Uu$ is a q-tame module then it has a well-defined persistence diagram $\dgm(\Uu)$.
If $\Uu, \Vv$ are q-tame persistence modules that are $\e$-interleaved then there exists an $\e$-matching between the multisets $\dgm(\Uu)$, $\dgm(\Vv)$. Thus, the bottleneck distance between the diagrams satisfies the bound $\bottle(\dgm (\Uu), \dgm (\Vv)) \leq \e$.
\qed
\end{theorem}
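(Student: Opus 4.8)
The plan is to route the whole argument through a single auxiliary object --- a measure on the plane encoding all the rank data of a q-tame module --- and to settle both halves of the statement at the level of that measure, following the machinery of \cite{cdsgo-sspm-12}. First I would attach to $\Uu$ its \emph{persistence measure}: for a closed rectangle $R=[a,b]\times[c,d]$ lying strictly above the diagonal (that is, $a\le b<c\le d$) set
\[
\mu_\Uu(R) \;=\; \rank(u_b^c)-\rank(u_a^c)-\rank(u_b^d)+\rank(u_a^d).
\]
Since $b<c$, q-tameness makes all four ranks finite, so $\mu_\Uu(R)$ is a well-defined integer. Two elementary facts then carry the load: (i) $\mu_\Uu(R)\ge 0$, because $\mu_\Uu(R)$ is the dimension of the subquotient $(\operatorname{im} u_a^c + \ker u_c^d\cap\operatorname{im} u_b^c)\,/\,\operatorname{im} u_a^c$ of $U_c$; and (ii) $\mu_\Uu$ is additive when a rectangle is cut by a horizontal or a vertical line, by a telescoping computation. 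A non-negative additive rectangle function of this kind extends to a locally finite Borel measure on the open half-plane $\{(x,y):x<y\}$ (here q-tameness is precisely what yields local finiteness); being integer-valued and locally finite, this measure is a countable sum $\sum_i\delta_{p_i}$ supported on a locally finite multiset. Adjoining the points with infinite death coordinate (detected by $\rank(u_a^b)$ as $b\to+\infty$) and the usual convention for the diagonal yields the persistence diagram $\dgm(\Uu)$. One may run all of this with decorated reals to record half-open versus closed lifespans; q-tameness lets us discard decorations when it comes to the bottleneck distance.

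Next I would feed the interleaving into the measure. From $\Psi\Phi=1_\Uu^{2\e}$ together with the fact that $\Psi$ has degree~$\e$, one gets that whenever $b-a\ge 2\e$ the shift $u_a^b$ factors as $U_a\xrightarrow{\phi_a}V_{a+\e}\xrightarrow{v_{a+\e}^{b-\e}}V_{b-\e}\xrightarrow{\psi_{b-\e}}U_b$, hence $\rank(u_a^b)\le\rank(v_{a+\e}^{b-\e})$, and symmetrically with $\Uu$ and $\Vv$ exchanged. These inequalities upgrade to the \emph{box inequalities}
\[
\mu_\Uu(R)\;\le\;\mu_\Vv(R^\e)\qquad\text{and}\qquad\mu_\Vv(R)\;\le\;\mu_\Uu(R^\e),
\]
valid for every rectangle $R$ whose thickening $R^\e=[a-\e,b+\e]\times[c-\e,d+\e]$ still lies above the diagonal. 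Since $\mu$ is an \emph{alternating} sum of ranks one cannot substitute term by term; instead one checks that $\Phi$ and $\Psi$ induce maps between the corner subquotients computing $\mu_\Uu(R)$ and $\mu_\Vv(R^\e)$ (respectively $\mu_\Vv(R)$ and $\mu_\Uu(R^\e)$) that compose to shift maps, which forces the dimension inequality.

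Finally, I would turn the box inequalities into an honest $\e$-matching between the two diagrams. For finitely supported multisets this is a Hall-type argument: restrict $\dgm(\Uu)$ and $\dgm(\Vv)$ to a large compact region of $\{(x,y):x<y\}$ bounded away from the diagonal and from infinity, build the bipartite graph joining $p$ to $q$ whenever $\|p-q\|_\infty\le\e$ (with auxiliary vertices absorbing points within $\e$ of the diagonal), and verify Hall's condition --- any candidate violating set of points of $\dgm(\Uu)$ can be enclosed in finitely many rectangles whose total $\mu_\Uu$-mass would exceed the $\mu_\Vv$-mass of their $\e$-thickenings, contradicting a box inequality. Exhausting $\{(x,y):x<y\}$ by such regions and passing to the limit --- using the local finiteness of both diagrams to prevent mass from escaping to the diagonal or to infinity --- produces an $\e$-matching on all of the plane, and so $\bottle(\dgm(\Uu),\dgm(\Vv))\le\e$ by definition of the bottleneck distance.

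The step I expect to be the main obstacle is this last one: the box inequalities are a soft statement about a measure, whereas an $\e$-matching is a rigid combinatorial bijection, and bridging the gap requires both the careful Hall argument in the finite case and a genuinely non-trivial limiting argument to reach q-tame modules, whose diagrams may carry infinitely many points accumulating on the diagonal. By comparison, the well-definedness of $\dgm(\Uu)$ and the derivation of the box inequalities are routine once facts (i) and (ii) and the basic factorization of shift maps are in hand.
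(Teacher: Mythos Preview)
The paper does not prove this theorem at all: it is quoted verbatim from~\cite{cdsgo-sspm-12} and closed with a bare \qed, so there is no in-paper argument to compare your proposal against. Your sketch is a faithful outline of the measure-theoretic machinery of~\cite{cdsgo-sspm-12} --- persistence measure from rank data, box inequalities from the interleaving, and a Hall-type matching argument followed by a limit --- and as such is exactly the proof the paper is invoking by citation.
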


\section{Multivalued maps}
\label{sec:multivalued-maps}

The notion of a simplicial map between simplicial complexes extends to
the notion of an $\e$-simplicial map between filtered simplicial
complexes in the following way:

\begin{definition}
Let $\Ss$ and $\Tt$ be two filtered simplicial complexes with vertex
sets $X$ and $Y$ respectively. A map $f: X \to Y$ is $\e$-simplicial
from $\Ss$ to $\Tt$ if it induces a simplicial map
$\Ss_a\to\Tt_{a+\e}$ for every $a \in \Rr$.
Equivalently, $f$ is $\e$-simplicial if and only if for any $a \in \Rr$ and any simplex $\sigma \in \Ss_a$, $f(\sigma)$ is a simplex of $\Tt_{a+\e}$.
\end{definition}

We wish to extend this concept to multivalued maps. Here are the basic notions.

A \textbf{multivalued map} $C : X \rightrightarrows Y$ from a set $X$ to a set
$Y$ is a subset of $X \times Y$, also denoted $C$, that projects
surjectively onto $X$ through the canonical projection $\pi_X:X \times Y\to
X$. The \textbf{image} $C(\sigma)$ of a subset $\sigma$ of $X$ is the canonical
projection onto $Y$ of the preimage of  $\sigma$ through $\pi_X$.
A (single-valued) map $f$ from~$X$ to~$Y$ is \textbf{subordinate} to~$C$ if we have $(x, f(x)) \in C$ for every $x \in X$; then we write $f: X \stackrel{C}{\to} Y$.
The \textbf{composite} of two multivalued maps $C : X \rightrightarrows Y$ and $D : Y \rightrightarrows Z$ is the multivalued map $D \circ C : X \rightrightarrows Z$, defined by:
\[
(x,z) \in D \circ C 
\; \Leftrightarrow \;
\text{there exists $y \in Y$ such that $(x,y) \in C$ and $(y,z) \in D$}
\]
The \textbf{transpose} of $C$, denoted $C^T$, is the image of $C$ through the symmetry map $(x,y)\mapsto(y,x)$.  Although $C^T$ is well-defined as a subset of $Y\times X$, it is not always a multivalued map because it may not project surjectively onto $Y$.

We now discuss simplicial multivalued maps.

\begin{definition}
Let $\Ss$ and $\Tt$ be two filtered simplicial complexes with vertex sets $X$ and $Y$ respectively. A multivalued map $C : X \rightrightarrows Y$ is \textbf{$\e$-simplicial} from $\Ss$ to $\Tt$ if for any $a \in \Rr$ and any simplex $\sigma \in \Ss_a$, every finite subset of $C(\sigma)$ is a simplex of $\Tt_{a+\e}$.
\end{definition}

\begin{proposition} \label{prop:e-simplicial1}
Let $C : X \rightrightarrows Y$ be an $\e$-simplicial multivalued map from $\Ss$ to
$\Tt$. Then $C$ induces a canonical linear map $\Hgr(C) \in \Hom^\e(\Hgr(\Ss),\Hgr(\Tt))$, equal to $\Hgr(f)$ for any~$f$ subordinate to~$C$.
\end{proposition}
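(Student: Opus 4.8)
The plan is to first show that a single-valued map subordinate to an $\e$-simplicial multivalued map $C$ is itself $\e$-simplicial in the sense of the earlier definition, then to show that any two such subordinate maps induce the \emph{same} homomorphism on persistent homology, and finally to \emph{define} $\Hgr(C)$ to be this common homomorphism. For the first step: if $f : X \to Y$ is subordinate to $C$ and $\sigma = [x_0,\dots,x_k] \in \Ss_a$, then $f(\sigma) = \{f(x_0),\dots,f(x_k)\}$ is a finite subset of $C(\sigma)$ (since $(x_i, f(x_i)) \in C$ for each $i$), hence a simplex of $\Tt_{a+\e}$ by the definition of $\e$-simpliciality of $C$. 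So $f$ is $\e$-simplicial, and by the second example in the excerpt it induces $\Hgr(f) \in \Hom^\e(\Hgr(\Ss),\Hgr(\Tt))$. One small point worth stating carefully: a subordinate single-valued map exists because $C$ projects surjectively onto $X$, so each fibre $C \cap (\{x\}\times Y)$ is nonempty and we may choose a value $f(x)$ in it (invoking the axiom of choice if $X$ is infinite).

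The heart of the argument is the independence of $\Hgr(f)$ from the choice of $f$. Let $f, g : X \to Y$ both be subordinate to $C$. I would show that for each $a$, the simplicial maps $f|_{\Ss_a}, g|_{\Ss_a} : \Ss_a \to \Tt_{a+\e}$ are \emph{contiguous}, and then invoke the standard fact that contiguous simplicial maps induce equal maps on homology. Contiguity here means: for every simplex $\sigma = [x_0,\dots,x_k] \in \Ss_a$, the set $f(\sigma) \cup g(\sigma) = \{f(x_0),\dots,f(x_k),g(x_0),\dots,g(x_k)\}$ is a simplex of $\Tt_{a+\e}$. But this set is precisely a finite subset of $C(\sigma)$ — every $f(x_i)$ and every $g(x_i)$ lies in $C(\sigma)$ because $(x_i,f(x_i))\in C$ and $(x_i,g(x_i))\in C$ with $x_i \in \sigma$ — so it is a simplex of $\Tt_{a+\e}$ by $\e$-simpliciality of $C$. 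Hence $f|_{\Ss_a}$ and $g|_{\Ss_a}$ are contiguous, so $\Hgr(f|_{\Ss_a}) = \Hgr(g|_{\Ss_a}) : \Hgr(\Ss_a) \to \Hgr(\Tt_{a+\e})$. Since this holds for every $a$, the degree-$\e$ homomorphisms $\Hgr(f)$ and $\Hgr(g)$ agree. We may therefore define $\Hgr(C) := \Hgr(f)$ for any subordinate $f$, which is well-defined and equals $\Hgr(f)$ for all such $f$.

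The main obstacle, and the only place requiring real care, is the contiguity step — specifically being sure that contiguity is the right relation and that it indeed implies equality on homology in this filtered/possibly-infinite-complex setting. The key structural observation making it work is that $C(\sigma)$ is a single set all of whose finite subsets are simplices of $\Tt_{a+\e}$, so \emph{any} combination of chosen values on the vertices of $\sigma$ — from $f$, from $g$, or mixed — spans a simplex; this is exactly what is needed for the elementary prism/chain-homotopy argument underlying contiguity invariance, and it works simplex-by-simplex without any finiteness assumption on $X$ or on the complexes. I would also remark that the construction is functorial: the identity multivalued map induces the identity, and $\Hgr(D \circ C) = \Hgr(D) \circ \Hgr(C)$, since a composite of subordinate maps $g \circ f$ is subordinate to $D \circ C$ and $\Hgr$ is functorial on simplicial maps — though strictly this functoriality is not part of the present statement and could be deferred.
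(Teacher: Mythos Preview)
Your proof is correct and follows essentially the same route as the paper: show that any subordinate $f$ is $\e$-simplicial, then show that any two subordinate maps are contiguous (because $f(\sigma)\cup g(\sigma)$ is a finite subset of $C(\sigma)$, hence a simplex of $\Tt_{a+\e}$), and conclude via the standard fact that contiguous simplicial maps induce the same map on homology. Your additional remarks on existence of a subordinate map and on functoriality are fine but, as you note, the latter is handled separately in the paper (Proposition~\ref{prop:e-simplicial3}).
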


\begin{proof}
Any choice of~$f$ induces a simplicial map $\Ss_a \to \Tt_{a+\e}$ at each~$a \in \Rr$, and these maps commute with the inclusions $\Ss_a \hookrightarrow \Ss_{b}$, $\Tt_{a+\e} \hookrightarrow \Tt_{b+\e}$ for all $a \leq b$. Thus $f$ induces $\Hgr(f) \in \Hom^\e(\Hgr(\Ss),\Hgr(\Tt))$.
Any two subordinate maps $f_1, f_2 : X \stackrel{C}{\to} Y$ induce simplicial maps $\Ss_a \to \Tt_{a+\e}$ which are contiguous. Indeed, for any $\sigma \in \Ss_a$ the two simplices $f_1(\sigma), f_2(\sigma)$ span a simplex of $\Tt_{a+\e}$ since their vertices comprise a finite subset of~$C(\sigma)$.
It follows~\cite[Theorems~12.4 \& 12.5]{Munkres} that $\Hgr(f_1) = \Hgr(f_2)$. Thus the map $\Hgr(C)$ is uniquely defined.
\end{proof}

Another immediate consequence is that the induced homomorphism is
invariant under taking subsets of $C$ that are also
multivalued maps:

\begin{proposition} \label{prop:e-simplicial2}
If $C' \subseteq C : X \rightrightarrows Y$ and $C$ is $\e$-simplicial from $\Ss$ to $\Tt$, then $C'$ is $\e$-simplicial from $\Ss$ to $\Tt$ and $\Hgr(C') = \Hgr(C)$.
\end{proposition}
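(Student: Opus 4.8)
The plan is to reduce everything to Proposition~\ref{prop:e-simplicial1} by checking that $C'$ is genuinely a multivalued map and that it is $\e$-simplicial, and then observing that the two induced homomorphisms agree because they can be computed from a common subordinate map. First I would verify the hypothesis that $C'$ projects surjectively onto $X$: this is exactly what is assumed when we say ``$C' \subseteq C : X \rightrightarrows Y$ is also a multivalued map,'' so there is nothing to prove beyond unpacking the notation --- $C'$ is a subset of $X \times Y$ that surjects onto $X$, hence a multivalued map in the sense of Section~\ref{sec:multivalued-maps}.

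Next I would check that $C'$ is $\e$-simplicial from $\Ss$ to $\Tt$. For any $a \in \Rr$ and any simplex $\sigma \in \Ss_a$, the inclusion $C' \subseteq C$ gives $C'(\sigma) \subseteq C(\sigma)$ directly from the definition of the image of a subset (the preimage under $\pi_X$ only shrinks, hence so does its projection onto $Y$). Therefore every finite subset of $C'(\sigma)$ is in particular a finite subset of $C(\sigma)$, which is a simplex of $\Tt_{a+\e}$ by the $\e$-simpliciality of $C$. This establishes that $C'$ is $\e$-simplicial, so by Proposition~\ref{prop:e-simplicial1} the homomorphism $\Hgr(C') \in \Hom^\e(\Hgr(\Ss), \Hgr(\Tt))$ is well defined.

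Finally, for the equality $\Hgr(C') = \Hgr(C)$, the key point is that any single-valued map $f$ subordinate to $C'$ is automatically subordinate to $C$: if $(x, f(x)) \in C'$ for all $x \in X$, then $(x, f(x)) \in C$ as well since $C' \subseteq C$. Such an $f$ exists because $C'$ is a multivalued map (pick, for each $x$, some $y$ with $(x,y) \in C'$). By the ``equal to $\Hgr(f)$ for any $f$ subordinate to $C$'' clause of Proposition~\ref{prop:e-simplicial1}, we have $\Hgr(C') = \Hgr(f) = \Hgr(C)$, and we are done.

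I do not anticipate a serious obstacle here; the proposition is essentially a bookkeeping consequence of the preceding one. The only point requiring mild care is the surjectivity-onto-$X$ condition in the definition of multivalued map --- it must be part of the hypothesis (``$C' \subseteq C$ that is \emph{also} a multivalued map''), since an arbitrary subset of $C$ need not surject onto $X$, and without it neither the claim that $C'$ is $\e$-simplicial nor the existence of a subordinate $f$ would make sense.
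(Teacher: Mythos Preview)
Your argument is correct and follows exactly the same route as the paper's own proof: observe that $C'$ is $\e$-simplicial because $C'(\sigma)\subseteq C(\sigma)$, and conclude $\Hgr(C')=\Hgr(C)$ by noting that any map subordinate to $C'$ is automatically subordinate to $C$. You have simply written out in full the details that the paper compresses into two lines.
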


\begin{proof}
Since $C'$ is a mutivalued map contained in $C$, it is also
$\e$-simplicial, and any map $f: X \to Y$ that is
subordinate to $C'$ is also subordinate to $C$, so we have $\Hgr(C') =
\Hgr(C)$.
\end{proof}

Finally, induced homomorphisms compose in the natural way: 

\begin{proposition} \label{prop:e-simplicial3}
Let $\Ss, \Tt, \Uu$ be filtered complexes with vertex sets $X, Y, Z$ respectively. If%
\begin{align*}
\text{$C : X \rightrightarrows Y$ is a $\e$-simplicial multivalued
  map from $\Ss$ to $\Tt$,} \\ 
\text{$D : Y \rightrightarrows Z$ is
  a $\delta$-simplicial  multivalued map from $\Tt$ to $\Uu$\hfill},
\end{align*}
then the composite $D \circ C : X \rightrightarrows Z$ is a $(\e+\delta)$-simplicial multivalued map from $\Ss$ to $\Uu$, and $\Hgr(D \circ C) = \Hgr(D) \circ \Hgr(C)$.
\end{proposition}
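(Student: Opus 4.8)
The plan is to reduce everything to the single-valued case already handled by Proposition~\ref{prop:e-simplicial1}. First I would check the combinatorial claim that $D \circ C$ is $(\e+\delta)$-simplicial. Fix $a \in \Rr$ and a simplex $\sigma \in \Ss_a$. Since $C$ is $\e$-simplicial, every finite subset of $C(\sigma)$ is a simplex of $\Tt_{a+\e}$; in particular $C(\sigma)$ itself, viewed through its finite subsets, lies in $\Tt_{a+\e}$. Now take any finite subset $\tau \subseteq (D \circ C)(\sigma)$. By the definition of the composite, each $z \in \tau$ arises from some $y \in C(\sigma)$ with $(y,z) \in D$; choosing one such $y$ for each $z$ gives a finite set $\rho \subseteq C(\sigma)$ with $\tau \subseteq D(\rho)$. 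Since $\rho$ is a finite subset of $C(\sigma)$, it is a simplex of $\Tt_{a+\e}$; applying the $\delta$-simpliciality of $D$ to this simplex shows every finite subset of $D(\rho)$ — hence $\tau$ — is a simplex of $\Uu_{a+\e+\delta} = \Uu_{a+(\e+\delta)}$. This establishes the first assertion.

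For the homomorphism identity, I would exploit the fact that the induced map does not depend on the choice of subordinate single-valued map. Pick any $g : Y \stackrel{D}{\to} Z$ and any $f : X \stackrel{C}{\to} Y$. Then the composite $g \circ f : X \to Z$ satisfies $(x, g(f(x))) \in D \circ C$ for every $x \in X$, since $(x,f(x)) \in C$ and $(f(x), g(f(x))) \in D$; that is, $g \circ f$ is subordinate to $D \circ C$. By Proposition~\ref{prop:e-simplicial1} applied three times, $\Hgr(D \circ C) = \Hgr(g \circ f)$, $\Hgr(C) = \Hgr(f)$, and $\Hgr(D) = \Hgr(g)$. By functoriality of simplicial homology on the levelwise simplicial maps $\Ss_a \to \Tt_{a+\e} \to \Uu_{a+\e+\delta}$ induced by $f$ and $g$, we have $\Hgr(g \circ f) = \Hgr(g) \circ \Hgr(f)$, and combining these equalities yields $\Hgr(D \circ C) = \Hgr(D) \circ \Hgr(C)$.

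The only subtlety — and the step I would be most careful about — is the bookkeeping in the combinatorial argument: one must not assume $C(\sigma)$ or $(D \circ C)(\sigma)$ is finite, so the statement has to be phrased throughout in terms of \emph{finite} subsets, and the key move is selecting a finite intermediate set $\rho$ that dominates a given finite $\tau$. Everything else is routine: the existence of a subordinate single-valued map to each of $C$, $D$, and $D \circ C$ is immediate (and already implicitly used in the earlier propositions, via surjectivity of the projections onto $X$ and $Y$), and the functoriality statement $\Hgr(g \circ f) = \Hgr(g) \circ \Hgr(f)$ is the standard functoriality of homology applied at each filtration level, compatible with the inclusions exactly as in the proof of Proposition~\ref{prop:e-simplicial1}.
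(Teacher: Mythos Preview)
Your proof is correct and follows the same approach as the paper: the first assertion is dispatched as a direct consequence of the definitions, and the second is obtained by picking subordinate maps $f$ and $g$, noting that $g \circ f$ is subordinate to $D \circ C$, and invoking Proposition~\ref{prop:e-simplicial1}. The paper's own proof is much terser --- it simply asserts that the $(\e+\delta)$-simpliciality is an ``immediate consequence of the definition'' --- whereas you have carefully spelled out the finite-subset bookkeeping, which is a genuine improvement in rigour.
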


\begin{proof}
$D \circ C$ is $(\e+\delta)$-simplicial as an immediate consequence of the
  definition of $\e$-simplicial multivalued map. Let $f : X
  \stackrel{C}{\rightarrow} Y$ be subordinate to $C$, and let $g : Y
  \stackrel{D}{\rightarrow} Z$ be subordinate to $D$. The composite
  $g \circ f : X \stackrel{D \circ C}{\longrightarrow} Z$ is subordinate
  to $D \circ C$, therefore $\Hgr(D \circ C) = \Hgr(D) \circ \Hgr(C)$.
\end{proof}

\section{Correspondences}
\label{sec:correspondences}

\subsection{Interleaving persistence modules of filtered complexes through correspondences}
\label{subsec:interleaving-correspondence}

\begin{definition}
A multivalued map $C : X \rightrightarrows Y$ is a \textbf{correspondence} if
the canonical projection $C\to Y$ is surjective, or equivalently, if
$C^T$ is also a multivalued map.
\end{definition}

We immediately deduce, if $C$ is a correspondence, that the identity maps $\one_X = \{(x,x): x \in X \}$ and $\one_Y = \{(y,y): y \in Y\}$ satisfy
\[
\one_X \subseteq C^T \circ C,
\qquad
\one_Y \subseteq C \circ C^T.
\]
From this property and propositions \ref{prop:e-simplicial2} and \ref{prop:e-simplicial3}, we deduce the following: 

\begin{proposition} \label{prop:canonical-interleaving}
Let $\Ss$, $\Tt$ be filtered complexes with vertex sets $X$, $Y$ respectively.
If $C : X \rightrightarrows Y$ is a correspondence such that $C$ and $C^T$ are both $\e$-simplicial, then together they induce a canonical $\e$-interleaving between $\Hgr(\Ss)$ and $\Hgr(\Tt)$, the interleaving homomorphisms being $\Hgr(C)$ and $\Hgr(C^T)$.
\qed
\end{proposition}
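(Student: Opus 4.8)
The plan is to assemble Proposition~\ref{prop:canonical-interleaving} directly from the machinery already developed, treating it essentially as a formal corollary of Propositions~\ref{prop:e-simplicial1}, \ref{prop:e-simplicial2} and \ref{prop:e-simplicial3} together with the inclusions $\one_X \subseteq C^T \circ C$ and $\one_Y \subseteq C \circ C^T$ noted just above the statement. First I would invoke Proposition~\ref{prop:e-simplicial1} twice: since $C$ is $\e$-simplicial from $\Ss$ to $\Tt$, it induces $\Hgr(C) \in \Hom^\e(\Hgr(\Ss),\Hgr(\Tt))$, and since $C^T$ is $\e$-simplicial from $\Tt$ to $\Ss$, it induces $\Hgr(C^T) \in \Hom^\e(\Hgr(\Tt),\Hgr(\Ss))$. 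These are the two candidate interleaving homomorphisms, so it remains only to verify the two interleaving identities $\Hgr(C^T) \circ \Hgr(C) = 1_{\Hgr(\Ss)}^{2\e}$ and $\Hgr(C) \circ \Hgr(C^T) = 1_{\Hgr(\Tt)}^{2\e}$.

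For the first identity I would argue as follows. By Proposition~\ref{prop:e-simplicial3}, the composite $C^T \circ C : X \rightrightarrows X$ is $2\e$-simplicial from $\Ss$ to $\Ss$ and satisfies $\Hgr(C^T \circ C) = \Hgr(C^T) \circ \Hgr(C)$. On the other hand, $\one_X$ is a multivalued map contained in $C^T \circ C$, so by Proposition~\ref{prop:e-simplicial2} it is itself $2\e$-simplicial from $\Ss$ to $\Ss$ and $\Hgr(\one_X) = \Hgr(C^T \circ C)$. Finally, the identity map $\mathrm{id}_X : X \to X$ is subordinate to $\one_X$, and the simplicial maps it induces $\Ss_a \to \Ss_{a+2\e}$ are exactly the inclusions; hence the homomorphism it induces on homology is precisely the shift map, i.e. $\Hgr(\one_X) = 1_{\Hgr(\Ss)}^{2\e}$. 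Chaining these three equalities gives $\Hgr(C^T) \circ \Hgr(C) = 1_{\Hgr(\Ss)}^{2\e}$. The second identity follows by the symmetric argument with $C \circ C^T$, $\one_Y$ and $\mathrm{id}_Y$ in place of $C^T \circ C$, $\one_X$ and $\mathrm{id}_X$.

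Having established both identities, the two homomorphisms $\Hgr(C)$ and $\Hgr(C^T)$ satisfy the definition of an $\e$-interleaving between $\Hgr(\Ss)$ and $\Hgr(\Tt)$, which completes the proof. Canonicity is automatic: each of $\Hgr(C)$ and $\Hgr(C^T)$ is well-defined independently of any choice of subordinate map, by Proposition~\ref{prop:e-simplicial1}.

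I do not anticipate a serious obstacle here, since every ingredient is already in place; the only point that needs a little care is the observation that the homomorphism induced by $\one_X$ is the shift map $1_{\Hgr(\Ss)}^{2\e}$ rather than merely some degree-$2\e$ endomorphism. This is immediate from unwinding the definitions — $\mathrm{id}_X$ induces the inclusion-induced maps $\Hgr(\Ss_a) \to \Hgr(\Ss_{a+2\e})$, which are by definition the structure maps $v_a^{a+2\e}$ of the persistence module — but it is the one step where one must resist the temptation to wave hands. Everything else is a bookkeeping exercise in composing the three propositions on multivalued maps.
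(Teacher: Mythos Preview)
Your proposal is correct and is exactly the argument the paper has in mind: the proposition is stated with a \qed\ and prefaced by the remark that it follows from the inclusions $\one_X \subseteq C^T \circ C$, $\one_Y \subseteq C \circ C^T$ together with Propositions~\ref{prop:e-simplicial2} and~\ref{prop:e-simplicial3}. You have simply written out the implicit details, including the identification $\Hgr(\one_X) = 1_{\Hgr(\Ss)}^{2\e}$, which is indeed the only step requiring a moment's thought.
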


\subsection{Applications to filtered complexes on metric spaces}
\label{subsec:interleaving-applications}

When $(X, d_X)$ and $(Y, d_Y)$ are metric spaces, the distortion of a
correspondence $C : X \rightrightarrows Y$ is defined as follows:
\[
\dis(C) = \sup \{ |d_X(x,x') - d_Y(y,y')| \,:\, (x,y), (x',y') \in C \}
\] 
The Gromov--Hausdorff distance (\cite{bbi-cmg-01}, Theorem 7.3.25) between $(X, d_X)$ and $(Y, d_Y)$ is 
then defined by taking the infimum of the distortions among all the correspondences between $X$ and $Y$:
\[
\gh(X,Y) = {\half} \inf \{ \dis(C) \,:\, \text{$C$ is a correspondence $X \rightrightarrows Y$} \}
\]
Although $\gh$ is not necessarily finite, it is a distance on the set of isometry classes of compact metric spaces:
(i) it is zero if and only if the spaces are isometric;
(ii) a correspondence and its transpose have the same distortion, so $\gh$ is symmetric;
and (iii) 
the composite of two correspondences $C, C'$ is a correspondence with distortion at most $\dis(C')+\dis(C)$, so $\gh$ satisfies the triangle inequality.

The theme of the next few examples is that low-distortion correspondences give rise to $\e$-simplicial maps on filtered complexes.

\subsubsection{The \vr\ complex}
Let $(X, d_X)$ be a metric space. For $a \in \Rr$ we define a simplicial complex $\rips(X, a)$ on the vertex set~$X$ by the following condition:
\[
[x_0, x_1, \dots, x_k] \in \rips(X, a)
\; \Leftrightarrow \;
\text{$d_X(x_i, x_j) \leq a$ for all $i,j$}
\]
For $a \leq 0$, note that $\rips(X,a)$ consists of the vertex set~$X$ alone.
There is a natural inclusion $\rips(X, a) \subseteq \rips(X, b)$ whenever $a \leq b$. Thus, the simplicial complexes $\rips(X,a)$ together with these inclusion maps define a filtered simplicial complex $\Rips(X)$ on~$X$, the \textbf{Vietoris--Rips complex}.

\begin{lemma}[Vietoris--Rips interleaving]
\label{lemma:rips-interleaving}
Let $(X, d_X)$, $(Y, d_Y)$ be metric spaces. For any $\e > 2 \gh(X,Y)$ the persistence modules $\Hgr(\Rips(X))$ and $\Hgr(\Rips(Y))$ are $\e$-interleaved. 
\end{lemma}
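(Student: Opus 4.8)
The plan is to deduce the Vietoris--Rips interleaving directly from Proposition~\ref{prop:canonical-interleaving}, so the only real work is to produce a correspondence $C : X \rightrightarrows Y$ such that both $C$ and $C^T$ are $\e$-simplicial from $\Rips(X)$ to $\Rips(Y)$ and vice versa. First I would use the definition of the Gromov--Hausdorff distance: since $\e > 2\gh(X,Y)$, there is a correspondence $C : X \rightrightarrows Y$ with $\dis(C) < \e$. The claim is that this $C$ does the job.

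Next I would verify the $\e$-simplicial property of $C$ from $\Rips(X)$ to $\Rips(Y)$. Fix $a \in \Rr$ and a simplex $\sigma = [x_0,\dots,x_k] \in \rips(X,a)$, so $d_X(x_i,x_j) \le a$ for all $i,j$. Let $\{y_0,\dots,y_m\}$ be any finite subset of $C(\sigma)$; then for each $y_p$ there is some $x_{i(p)} \in \sigma$ with $(x_{i(p)}, y_p) \in C$. Applying the distortion bound to the pairs $(x_{i(p)},y_p), (x_{i(q)},y_q) \in C$ gives
\[
d_Y(y_p, y_q) \le d_X(x_{i(p)}, x_{i(q)}) + \dis(C) \le a + \e,
\]
so $\{y_0,\dots,y_m\}$ is a simplex of $\rips(Y, a+\e)$. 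Hence every finite subset of $C(\sigma)$ lies in $\rips(Y,a+\e)$, which is exactly the $\e$-simplicial condition. The identical computation, with the roles of $X$ and $Y$ swapped and using that $\dis(C^T) = \dis(C) < \e$, shows $C^T$ is $\e$-simplicial from $\Rips(Y)$ to $\Rips(X)$.

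Finally, since $C$ is a correspondence and both $C$ and $C^T$ are $\e$-simplicial, Proposition~\ref{prop:canonical-interleaving} immediately yields an $\e$-interleaving between $\Hgr(\Rips(X))$ and $\Hgr(\Rips(Y))$, with interleaving homomorphisms $\Hgr(C)$ and $\Hgr(C^T)$. I do not anticipate a genuine obstacle here: the argument is a direct chaining of the definitions, and the one point that deserves care is the use of the strict inequality $\e > 2\gh(X,Y)$ — this is what guarantees the existence of a correspondence with distortion \emph{strictly} below $\e$ (the infimum in the definition of $\gh$ need not be attained), and a distortion of at most $\e$ is all the simplex estimate above requires.
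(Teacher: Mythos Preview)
Your argument is correct and follows essentially the same route as the paper's own proof: pick a correspondence $C$ with $\dis(C)\le\e$, check directly from the distortion bound that $C$ (and, by symmetry, $C^T$) is $\e$-simplicial between the \vr\ filtrations, and then invoke Proposition~\ref{prop:canonical-interleaving}. Your closing remark about the strict inequality $\e>2\gh(X,Y)$ is a welcome clarification that the paper leaves implicit.
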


\begin{proof}
Let $C : X \rightrightarrows Y$ be a correspondence with distortion at most $\e$. 

If $\sigma \in \rips(X, a)$ then $d_X(x,x') \leq a$ for all $x,x' \in \sigma$.
Let $\tau \subseteq C(\sigma)$ be any finite subset. For any $y,y' \in \tau$ there exist $x,x' \in \sigma$ such that $y \in C(x)$, $y' \in C(x')$, and therefore:
\[
d_Y(y,y') \leq d_X(x,x') \leq a + \e
\]
It follows that $\tau \in \rips(Y, a+\e)$.

We have shown that $C$ is $\e$-simplicial from $\Rips(X)$ to $\Rips(Y)$. Symetrically, $C^T$ is $\e$-simplicial from $\Rips(Y)$ to $\Rips(X)$.
The result now follows from Proposition~\ref{prop:canonical-interleaving}.
\end{proof}

\subsubsection{The intrinsic \vc\ complex}
Let $(X, d_X)$ be a metric space. For $a \in \Rr$ we define a simplicial complex $\cech(X, a)$ on the vertex set~$X$ by the following condition:
\[
[x_0, x_1, \dots, x_k] \in \cech(X, a)
\; \Leftrightarrow \;
\bigcap_{i=0}^k B(x_i,a) \ne \emptyset
\]
Here $B(x,a) = \{x' \in X: d_X(x,x') \leq a \}$ denotes the closed ball with centre $x \in X$ and radius~$a$.
Any point $\bar{x}$ in the intersection $\bigcap_i B(x_i, a)$ is called an $a$-\textbf{centre} for the simplex $[x_0, \dots, x_k]$.

For $a \leq 0$, note that $\cech(X,a)$ consists of the vertex set~$X$ alone. There is a natural inclusion $\cech(X, a) \subseteq \cech(X, b)$ whenever $a \leq b$. Thus, the simplicial complexes $\cech(X,a)$ together with these inclusion maps define a filtered simplicial complex $\Cech(X)$ on~$X$, the \textbf{(intrinsic) \vc\ complex}\footnote{%
We will usually drop the word `intrinsic' unless we contrasting it with `ambient'.
}.

\begin{lemma}[\vc\ interleaving]
\label{lemma:cech-interleaving}
Let $(X, d_X), (Y, d_Y)$ be metric spaces. For any $\e > 2 \gh(X,Y)$ the persistence modules $\Hgr(\Cech(X))$ and $\Hgr(\Cech(Y))$ are $\e$-interleaved. 
\end{lemma}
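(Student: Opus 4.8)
The plan is to mimic the proof of Lemma~\ref{lemma:rips-interleaving} almost verbatim, replacing the diameter condition of the Rips complex by the ``common $a$-centre'' condition of the \vc\ complex. Let $C : X \rightrightarrows Y$ be a correspondence with $\dis(C) \leq \e$. The goal is to show that $C$ is $\e$-simplicial from $\Cech(X)$ to $\Cech(Y)$; by symmetry the same argument applied to $C^T$ (which has the same distortion) shows $C^T$ is $\e$-simplicial from $\Cech(Y)$ to $\Cech(X)$, and then Proposition~\ref{prop:canonical-interleaving} delivers the $\e$-interleaving.

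So suppose $\sigma = [x_0, \dots, x_k] \in \cech(X,a)$, and let $\bar x \in X$ be an $a$-centre for $\sigma$, i.e. $d_X(x_i, \bar x) \leq a$ for all $i$. Let $\tau \subseteq C(\sigma)$ be an arbitrary finite subset; I need to produce an $(a+\e)$-centre in $Y$ for $\tau$. The natural candidate is any $\bar y \in Y$ with $(\bar x, \bar y) \in C$, which exists because $C$ projects surjectively onto $X$ (here one could equivalently first enlarge $\sigma$ to include $\bar x$, but picking $\bar y$ directly is cleaner). For each $y \in \tau$ there is some $x \in \sigma$ with $(x,y) \in C$; since also $(\bar x, \bar y) \in C$ and $d_X(x, \bar x) \leq a$, the distortion bound gives
\[
d_Y(y, \bar y) \leq d_X(x, \bar x) + \e \leq a + \e .
\]
Hence $\bar y \in \bigcap_{y \in \tau} B(y, a+\e)$, which is therefore nonempty, so $\tau \in \cech(Y, a+\e)$. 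This establishes that every finite subset of $C(\sigma)$ is a simplex of $\cech(Y, a+\e)$, which is exactly the $\e$-simplicial condition.

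The only mild subtlety — and the one point worth stating carefully rather than the routine distance chase — is the choice of $\bar y$: it must be produced \emph{before} $\tau$ is examined, using surjectivity of $\pi_X$ restricted to $C$, so that a single $\bar y$ works simultaneously for all points of $\tau$. Once that is in place the argument is entirely parallel to the Rips case, and no tameness or finiteness hypotheses are needed at this stage (those enter later, in Section~\ref{subsec:precompact}, to upgrade the interleaving into a bottleneck bound via Theorem~\ref{thm:stability2}). I do not anticipate any real obstacle; the proof is a direct transcription, with ``diameter $\leq a$'' replaced by ``admits an $a$-centre'' and the triangle-type estimate run through the centre point.
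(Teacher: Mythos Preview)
Your proof is correct and follows essentially the same approach as the paper: pick a correspondence $C$ of distortion at most~$\e$, choose $\bar y \in C(\bar x)$ for an $a$-centre $\bar x$ of~$\sigma$, use the distortion bound to show $\bar y$ is an $(a+\e)$-centre for any finite $\tau \subseteq C(\sigma)$, then invoke symmetry and Proposition~\ref{prop:canonical-interleaving}. The emphasis you place on choosing $\bar y$ once, independently of~$\tau$, is exactly the point; the paper's proof does the same thing without commenting on it.
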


\begin{proof}
Let $C : X \rightrightarrows Y$ be a correspondence with distortion at most $\e$. 

Consider $\sigma \in \cech(X, a)$. Let $\bar{x}$ be an $a$-centre for~$\sigma$, so $d_X(\bar{x}, x) \leq a$ for all $x \in \sigma$.
Pick $\bar{y} \in C(\bar{x})$.
Now for any $y \in C(\sigma)$ we have $y \in C(x)$ for some~$x \in \sigma$, and therefore:
\[
d_Y(\bar{y},y) \leq d_X(\bar{x},x) + \e \leq a + \e
\]
Let $\tau \subseteq C(\sigma)$ be any finite subset; then $\bar{y}$ is an $(a+\e)$-centre for~$\tau$ and hence $\tau \in \cech(Y, a+\e)$.

We have shown that $C$ is $\e$-simplicial from $\Cech(X)$ to $\Cech(Y)$. Symetrically, $C^T$ is $\e$-simplicial from $\Cech(Y)$ to $\Cech(X)$.
The result now follows from Proposition~\ref{prop:canonical-interleaving}.
\end{proof}

\subsubsection{Ambient \vc\ complexes and Dowker complexes}

The reader should be aware that there are two distinct uses of the phrase `\vc\ complex'. The first is the intrinsic \vc\ complex described in the preceding subsection, that is constructed from a single metric space.
The second, very commonly used in topological data analysis, is built from a pair or triple of spaces.
These `ambient' \vc\ complexes belong to a much more general family, the Dowker complexes. We consider these now.

Let $X \subseteq \Rr^n$. If $X$ is assumed to be sampled from some unknown object, we can attempt to recover the structure of the object by thickening each point to a closed ball of radius~$a$, say. By the Nerve Lemma~\cite[Section 4.G]{h-at-01}, the homotopy type of the thickened set can be retrieved by constructing the nerve of the collection of balls. This has vertex set~$X$ and a simplex for every finite subset of~$X$ for which the corresponding balls have nonempty intersection in~$\Rr^n$.

We write $\cech(X, \Rr^n; a)$ for this nerve, and $\Cech(X, \Rr^n)$ for the filtered complex obtained by varying~$a$.
The second argument~$\Rr^n$ is often omitted in certain literatures, being regarded as implicit.
For us, however, $\Cech(X)$ refers to the intrinsic \vc\ complex, where a simplex is included only if the corresponding balls meet at a point in~$X$ itself.

In general, an \textbf{ambient \vc\ complex} is defined as follows.
Let $L, W$ be subsets (`landmarks' and `witnesses') of an unnamed metric space. For $a \in \Rr$, consider the complex with vertices~$L$ and simplices determined by:
\[
\sigma \in \cech(L, W; a)
\; \Leftrightarrow \;
\text{$\exists w \in W$ such that $d(w,l) \leq a$ for all~$l \in \sigma$}
\]
The resulting filtered complex is denoted $\Cech(L,W)$.

\begin{example}
The intrinsic \vc\ complex $\Cech(X)$ for a metric space~$X$ is equal to $\Cech(X,X)$, where the ambient space is $X$ itself.
\end{example}

More generally, a \textbf{Dowker complex} is defined as follows.
Let $L,W$ be two sets and let $\Lambda: L \times W \to \Rr$ be any function at all.
For $a \in \Rr$, consider the complex with vertices~$L$ and simplices determined by:
\[
\sigma \in \dowk(\Lambda, a)
\; \Leftrightarrow \;
\text{$\exists w \in W$ such that $\Lambda(l,w) \leq a$ for all~$l \in \sigma$}
\]
The resulting filtered complex is denoted $\Dowk(\Lambda)$.

\begin{example}
The intrinsic \vc\ complex $\Cech(X)$ for a metric space~$X$ is equal to $\Dowk(d_X)$, where $d_X: X \times X \to \Rr$ is the metric.
\end{example}

\begin{example}
The ambient \vc\ complex $\Cech(L,W)$ for a pair of subsets of a metric space is equal to $\Dowk(d|_{L \times W})$, where $d$ is the ambient metric.
\end{example}

\begin{remark}
\label{rem:dowker}
We name this complex in honour of C.~H.\ Dowker~\cite{Dowker_1952}, who compared two simplicial complexes constructed from a binary relation.
Dowker's theorem implies that $\dowk(\Lambda, a)$ and $\dowk(\Lambda^{T}, a)$ have the same homotopy type, where
\[
\Lambda^T : W \times L \to \Rr; \; (w,l) \mapsto \Lambda(l,w)
\]
is the `transpose' of~$\Lambda$ (thus changing the vertex set to~$W$).
This is essentially an instance of the Nerve Lemma, since each complex can be interpreted as the nerve of a suitable covering of the other. The function~$\Lambda$ can be thought of as a filtered binary relation.
Since the Nerve Lemma is functorial (i.e.\ respects maps) \cite[Lemma~3.4]{co-tpbre-08} we obtain the stronger conclusion that $\Dowk(\Lambda)$ and $\Dowk(\Lambda^T)$ have the same filtered homotopy type. It follows that they have the same persistent homology, and therefore, where defined, the same persistence diagrams. We call this phenomenon \textbf{Dowker duality}.
\end{remark}

Two sets of data $(L, W, \Lambda)$ and $(L', W', \Lambda')$ may be compared using a pair of correspondences $C : L \rightrightarrows L'$ and $D : W \rightrightarrows W'$. We define the distortion for such a pair to be:
\[
\dis(C,D) =
\sup_{(l,l') \in C}
\,
\sup_{(w,w') \in D}
\,
|\Lambda(l,w) -\Lambda'(l',w')|
\] 

\begin{lemma}[Dowker interleaving]
\label{lemma:ex-cech-interleaving}
Let $L, L', W, W'$ be sets with functions $\Lambda: L \times W \to \Rr$ and $\Lambda': L' \times W' \to \Rr$. 
If $C: L \rightrightarrows L'$ and $D: W \rightrightarrows W'$ are correspondences and $\e \geq \dis(C,D)$ then the persistence modules $\Hgr(\Dowk(\Lambda))$ and $\Hgr(\Dowk(\Lambda'))$ are $\e$-interleaved.
\end{lemma}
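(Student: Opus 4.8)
The plan is to reduce the Dowker interleaving statement to the correspondence machinery of Section~\ref{sec:multivalued-maps} by building a single correspondence at the level of the \emph{union} of landmark and witness sets. Concretely, given $C : L \rightrightarrows L'$ and $D : W \rightrightarrows W'$ with $\dis(C,D) \le \e$, I would first observe that $C$ itself is $\e$-simplicial from $\Dowk(\Lambda)$ to $\Dowk(\Lambda')$: if $\sigma \in \dowk(\Lambda,a)$ there is a witness $w \in W$ with $\Lambda(l,w) \le a$ for all $l \in \sigma$; picking any $w' \in D(w)$, every finite $\tau \subseteq C(\sigma)$ satisfies $\Lambda'(l',w') \le \Lambda(l,w) + \e \le a+\e$ for a suitable $l \in \sigma$ with $(l,l') \in C$, so $\tau \in \dowk(\Lambda',a+\e)$. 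Symmetrically $C^T$ is $\e$-simplicial from $\Dowk(\Lambda')$ to $\Dowk(\Lambda)$, using that $\dis(C^T,D^T) = \dis(C,D)$.

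Then I would apply Proposition~\ref{prop:canonical-interleaving} to the correspondence $C : L \rightrightarrows L'$ between the vertex sets of $\Dowk(\Lambda)$ and $\Dowk(\Lambda')$: since $C$ and $C^T$ are both $\e$-simplicial, they induce a canonical $\e$-interleaving between $\Hgr(\Dowk(\Lambda))$ and $\Hgr(\Dowk(\Lambda'))$, with interleaving homomorphisms $\Hgr(C)$ and $\Hgr(C^T)$. That is exactly the claimed conclusion. Notice that $D$ only enters as a device to produce, for each witness of $\sigma$, a witness of its image; the interleaving itself is carried by $C$ alone, which is the reason Proposition~\ref{prop:canonical-interleaving} suffices without needing to manipulate $D$ homologically.

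The one subtlety worth spelling out is the choice in ``pick any $w' \in D(w)$'': different witnesses $w'$ could in principle be used for different simplices, but the $\e$-simpliciality condition of Definition (for multivalued maps) is a condition on $C(\sigma)$ simplex by simplex, so no coherence across simplices is required, and Proposition~\ref{prop:e-simplicial1} then guarantees the induced map is independent of the choice of subordinate single-valued map $f : L \stackrel{C}{\to} L'$. So the only genuine computation is the triangle-type estimate $|\Lambda'(l',w') - \Lambda(l,w)| \le \e$, which is immediate from the definition of $\dis(C,D)$ once we know $(l,l') \in C$ and $(w,w') \in D$.

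I do not expect a serious obstacle here; the main thing to get right is bookkeeping — making sure the inequality is applied with the correct pair $(l,l')$ coming from $y' \in C(x')$ style membership, and checking the symmetric statement uses $D^T$ rather than $D$. If one wanted the stronger conclusion that the interleaving is compatible with Dowker duality (i.e. that it matches up with the interleaving one would get between $\Hgr(\Dowk(\Lambda^T))$ and $\Hgr(\Dowk(\Lambda'^T))$ via $D$), that would require the functoriality of the Nerve Lemma cited in Remark~\ref{rem:dowker}, but the statement as given asks only for an $\e$-interleaving, so Proposition~\ref{prop:canonical-interleaving} closes it.
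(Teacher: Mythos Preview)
Your proposal is correct and follows essentially the same route as the paper: show that $C$ is $\e$-simplicial from $\Dowk(\Lambda)$ to $\Dowk(\Lambda')$ by picking a witness $w$ for $\sigma$, pushing it to some $w' \in D(w)$, and using the distortion bound to check that any finite $\tau \subseteq C(\sigma)$ lies in $\dowk(\Lambda',a+\e)$; then argue symmetrically for $C^T$ and invoke Proposition~\ref{prop:canonical-interleaving}. Your opening line about ``building a single correspondence at the level of the union of landmark and witness sets'' is never actually carried out and is not needed---the argument you give uses $C$ alone on the vertex sets, with $D$ entering only to transport witnesses, exactly as in the paper.
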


\begin{proof}
Consider $\sigma \in \dowk(\Lambda, a)$. Let $w$ be an $a$-centre for~$\sigma$, so $\Lambda(l,w) \leq a$ for all $l \in \sigma$.
Pick $w' \in C(w)$.
For any $l' \in C(\sigma)$ we have $l' \in C(l)$ for some~$l \in \sigma$, so:
\[
\Lambda'(l',w') \leq \Lambda(l,w) + \e \leq a + \e
\]
It follows that each finite $\sigma' \subseteq C(\sigma)$ belongs to $\dowk(\Lambda',a+\e)$.

We have shown that $C$ is $\e$-simplicial from $\Dowk(\Lambda)$ to $\Dowk(\Lambda')$. Symetrically, $C^T$ is $\e$-simplicial from $\Dowk(\Lambda')$ to $\Dowk(\Lambda)$.
The result now follows from Proposition~\ref{prop:canonical-interleaving}.
\end{proof}

Let $\haus$ denote the Hausdorff distance between subsets of a metric space.

\begin{corollary}[ambient \vc\ interleaving]
\label{cor:clas-cech-interleaving}
Let $L, L'$ and $W$ be subsets of a metric space.
For any $\e > \haus(L,L')$ the ambient \vc\ persistence modules $\Hgr(\Cech(L,W))$ and $\Hgr(\Cech(L',W))$ are $\e$-interleaved.
\end{corollary}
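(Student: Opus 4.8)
The plan is to deduce this from the Dowker interleaving lemma (Lemma~\ref{lemma:ex-cech-interleaving}) by exhibiting a suitable pair of correspondences. Recall that $\Cech(L,W) = \Dowk(d|_{L \times W})$ and $\Cech(L',W) = \Dowk(d|_{L' \times W})$, where $d$ is the ambient metric. So we need correspondences $C : L \rightrightarrows L'$ and $D : W \rightrightarrows W$ with $\dis(C,D) \le \e$ with respect to $\Lambda = d|_{L \times W}$ and $\Lambda' = d|_{L' \times W}$. The natural choice for $D$ is the identity correspondence $\one_W = \{(w,w) : w \in W\}$, since the witness set is the same on both sides.

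The key step is to build $C$ from the Hausdorff proximity of $L$ and $L'$. Fix any $\e > \haus(L,L')$. Then every point of $L$ lies within distance $\e$ of some point of $L'$, and vice versa; define
\[
C = \{ (l,l') \in L \times L' : d(l,l') \le \e \}.
\]
I would first check that $C$ is a correspondence: it projects onto $L$ because $\haus(L,L') < \e$ guarantees each $l \in L$ has a partner $l' \in L'$ with $d(l,l') \le \e$, and symmetrically it projects onto $L'$. Next, I would bound the distortion: for any $(l,l') \in C$ and $(w,w) \in \one_W$, the triangle inequality gives $|d(l,w) - d(l',w)| \le d(l,l') \le \e$, so $\dis(C,\one_W) \le \e$.

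With these two facts in hand, Lemma~\ref{lemma:ex-cech-interleaving} applied to $C$ and $D = \one_W$ immediately yields that $\Hgr(\Dowk(d|_{L\times W}))$ and $\Hgr(\Dowk(d|_{L'\times W}))$ are $\e$-interleaved, which is exactly the assertion that $\Hgr(\Cech(L,W))$ and $\Hgr(\Cech(L',W))$ are $\e$-interleaved. Since this holds for every $\e > \haus(L,L')$, the corollary follows.

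There is no real obstacle here; the argument is a routine specialization. The only point requiring a little care is confirming that $C$ as defined genuinely projects surjectively onto \emph{both} $L$ and $L'$ — this is where the full strength of the Hausdorff distance (as opposed to one-sided coverage) is used, and it is also why we need the strict inequality $\e > \haus(L,L')$ rather than $\e \ge \haus(L,L')$, so that the relevant infima are actually attained by points at distance at most $\e$.
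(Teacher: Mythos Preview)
Your proof is correct and is essentially identical to the paper's own argument: the paper also identifies $\Cech(L,W)$ and $\Cech(L',W)$ with Dowker complexes, takes $D = \{(w,w) : w \in W\}$, defines $C$ as the set of pairs $(l,l')$ with $d(l,l') < \e$ (you use $\leq \e$, which works just as well), checks $\dis(C,D) \leq \e$, and invokes Lemma~\ref{lemma:ex-cech-interleaving}.
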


\begin{proof}
We regard the complexes as $\Dowk(\Lambda)$, $\Dowk(\Lambda')$, where $\Lambda = d|_{L \times W}$ and $\Lambda' = d|_{L' \times W}$. Since $\e > \haus(L,L')$ the sets
\begin{align*}
C
&= \{ (l, l') \,:\, \text{$l \in L$, $l' \in L'$ and $d(l,l') < \e$} \},
\\
D &= \{ (w,w) \,:\, w \in W \}
\end{align*}
are correspondences, and $\dis(C,D) \leq \e$.
It follows from Lemma~\ref{lemma:ex-cech-interleaving} that the two persistence modules are $\e$-interleaved.
\end{proof}

It is worth pointing out that Corollary~\ref{cor:clas-cech-interleaving} is well known in the special case $L, L' \subseteq W = \Rr^n$. The usual argument is based on the Nerve Lemma, so it relies on the local topological properties of euclidean space and does not work in general. The elementary proof here shows that the dependence on the Nerve Lemma is  unnecessary.

\subsubsection{The witness complex}

Let $L, W$ be two sets (`landmarks' and `witnesses') and let $\Lambda : L \times W \to \Rr$ be any function. For any finite subset $\sigma \subseteq L$, and any $w \in W$ and $a \in \Rr$, we say that $w$ is an {\bf $a$-witness} for the simplex~$\sigma$ iff
\[
\Lambda(l,w) \leq \Lambda(l',w) + a
\quad
\text{for all $l\in \sigma$ and $l'\in L\setminus\sigma$}.
\]
Given $L, W$ and $\Lambda$, we can then define for any $a \in \Rr$ a simplicial complex $\wit(L,W; a)$ by
\[
\sigma \in \wit(L, W; a)
\;
\Leftrightarrow
\;
\forall \tau \subseteq \sigma,
\;
\exists w \in W
\;
\text{such that $w$ is an $a$-witness for $\tau$.}
\]

There is a natural inclusion $\wit(L,W; a) \subseteq \wit(L,W; b)$ when $a \leq b$, since an $a$-witness is obviously a $b$-witness. The simplicial complexes $\wit(L,W;a)$ together with these inclusion maps define a filtered simplicial complex $\Wit(L,W)$ with vertex set $L$, called the {\bf witness complex filtration}.

\begin{remark}
We alert the reader that the witness complex defined here has nontrivial behaviour for $a < 0$, unlike the \vr\ and \vc\ complexes. This can be suppressed if necessary.
\end{remark}

We now show that the witness complex filtration is stable with respect to varying the witness set while keeping the landmark set fixed.
Let $L$ be a set and let $W, W'$ be witness sets for~$L$ with respect to maps $\Lambda : L \times W \to \Rr$ and $\Lambda' : L \times W' \to \Rr$. The distortion of a correspondence $C : W \rightrightarrows W'$ is defined: 
\[
\dis(C) =
\sup_{l\in L} \ \sup_{(w,w')\in C} \ | \Lambda(l,w) - \Lambda'(l,w') |
\]

\begin{lemma}[witness complex interleaving]
\label{lemma:wit-interleaving}
Let $L$ be a set, and let $W, W'$ be two witness sets of~$L$ with respect to maps $\Lambda : L \times W \to \Rr$ and $\Lambda' : L \times W' \to \Rr$.
If $C: W \rightrightarrows W'$ is a correspondence and $\e \geq 2 \dis(C)$ then the persistence modules $\Hgr(\Wit(L,W))$ and $\Hgr(\Wit(L,W'))$ are $\e$-interleaved.
\end{lemma}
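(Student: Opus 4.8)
The plan is to mimic the proofs of the earlier interleaving lemmas, but with one new wrinkle: unlike the Rips, \vc, and Dowker settings, here the correspondence is only between the \emph{witness} sets, while the landmark set (hence the vertex set of the complexes) is held fixed. So the natural map on complexes is the identity on $L$, and the content is that this identity is $\e$-simplicial in both directions. First I would make the following observation precise: if $w \in W$ is an $a$-witness for a simplex $\tau \subseteq L$, and $(w,w') \in C$, then $w'$ is a $(a + 2\dis(C))$-witness for $\tau$. Indeed, for $l \in \tau$ and $l' \in L \setminus \tau$ we have $\Lambda'(l',w') \geq \Lambda(l',w) - \dis(C) \geq \Lambda(l,w) - a - \dis(C) \geq \Lambda'(l,w') - a - 2\dis(C)$, using the definition of $\dis(C)$ at the fixed landmarks $l$ and $l'$ and the witness inequality for $w$. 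Since $C$ is a correspondence, every $w \in W$ has at least one partner $w' \in C(w)$, so this shows every $a$-witnessed simplex is $(a+\e)$-witnessed using witnesses from $W'$, with $\e = 2\dis(C)$.

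Next I would package this into the statement that the identity map $\one_L : L \to L$ (viewed as the correspondence $\{(l,l) : l \in L\}$) is $\e$-simplicial from $\Wit(L,W)$ to $\Wit(L,W')$. For this I take $\sigma \in \wit(L,W;a)$; by definition every face $\tau \subseteq \sigma$ has an $a$-witness $w_\tau \in W$; by the observation each such $\tau$ then has an $(a+\e)$-witness $w'_\tau \in W'$; hence $\sigma \in \wit(L,W';a+\e)$. Any finite subset of $\one_L(\sigma) = \sigma$ is a face of $\sigma$ and so also lies in $\wit(L,W';a+\e)$, which is exactly the $\e$-simplicial condition. By symmetry (swapping the roles of $W$ and $W'$, noting $\dis(C^T) = \dis(C)$ and $\one_L^T = \one_L$), the identity is also $\e$-simplicial from $\Wit(L,W')$ to $\Wit(L,W)$. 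Then Proposition~\ref{prop:canonical-interleaving} applied to the correspondence $\one_L : L \rightrightarrows L$ — which is trivially a correspondence — yields the $\e$-interleaving of $\Hgr(\Wit(L,W))$ and $\Hgr(\Wit(L,W'))$, with both interleaving maps being $\Hgr(\one_L)$.

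The main obstacle, and really the only subtlety, is getting the constant right: the factor $2$ in $\e \geq 2\dis(C)$ is essential and comes from the fact that the witness condition is a two-sided comparison $\Lambda(l,w) \le \Lambda(l',w) + a$ involving landmarks on both sides of the face, so a perturbation of the witness costs $\dis(C)$ on each side — contrast this with the \vc\ and Dowker cases, where the centre condition is one-sided ($\Lambda(l,w) \le a$) and $\e \ge \dis$ suffices. I would also want to double-check the edge case of faces $\tau = \sigma = L$ itself (when $L \setminus \sigma = \emptyset$, the witness condition is vacuous and any $w$ works, so there is nothing to prove), and the possibility that $W$ or $W'$ is empty (then $\Wit$ has no positive-dimensional simplices and, for the relevant range of $a$, no simplices at all, so the interleaving is trivial); neither causes difficulty. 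Everything else is a routine transcription of the pattern already established in Lemmas~\ref{lemma:rips-interleaving}, \ref{lemma:cech-interleaving}, and~\ref{lemma:ex-cech-interleaving}.
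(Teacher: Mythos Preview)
Your proof is correct and follows essentially the same route as the paper: show that the identity correspondence $\one_L$ is $\e$-simplicial in both directions by verifying that an $a$-witness $w$ for a face $\tau$ transports along $C$ to an $(a+\e)$-witness $w' \in C(w)$, then conclude the interleaving. The only cosmetic difference is that the paper writes the chain of inequalities as $\Lambda'(l,w') \leq \Lambda(l,w) + \half\e \leq \Lambda(l',w) + a + \half\e \leq \Lambda'(l',w') + a + \e$ (using $\dis(C)\le\half\e$ directly) rather than working with $\dis(C)$ and then invoking $2\dis(C)\le\e$, and it does not explicitly cite Proposition~\ref{prop:canonical-interleaving} at the end; your version is, if anything, slightly more careful about the edge cases and the formal $\e$-simplicial condition.
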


\begin{proof}
Let $\sigma \in \wit(L,W; a)$. For every $\tau \subseteq \sigma$ we argue as follows. Let $w \in W$ be an $a$-witness for~$\tau$, and select $w' \in C(w)$.
For all $l\in \tau$ and $l'\in L\setminus\tau$ we have
\[
\Lambda'(l,w')
\leq \Lambda(l,w) +\half\e
\leq \Lambda(l',w) + a + \half\e
\leq \Lambda'(l',w') + a + \e
\]
so $w' \in W'$ is an $(a+\e)$-witness for~$\tau$.
It follows that $\sigma \in \wit(L, W'; a+\e)$.

Thus, the identity $\one_L$ is an $\e$-simplicial map $\Wit(L,W) \to\Wit(L,W')$, and $\one_L^T = \one_L$ is an $\e$-simplicial map $\Wit(L,W') \to \Wit(L,W)$ by symmetry.
We conclude that $\Hgr(\Wit(L,W))$ and $\Hgr(\Wit(L,W'))$ are $\e$-interleaved.
\end{proof}

The most common form of witness complex takes $L, W$ to be subsets of a metric space with $\Lambda = d|_{L \times W}$ restricted from the ambient metric. Different witness sets may be compared using the Hausdorff distance~$\haus$.

\begin{corollary}
\label{cor:wit-interleaving-metric}
Let $L,W,W'$ be subsets of a metric space, where $W, W'$ are witness sets for~$L$ with respect to $\Lambda = d|_{L \times W}$ and $\Lambda' = d|_{L \times W'}$. For any $\e > 2 \haus(W,W')$ the persistence modules $\Hgr(\Wit(L,W))$ and $\Hgr(\Wit(L,W'))$ are $\e$-interleaved.
\end{corollary}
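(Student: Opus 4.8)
The plan is to deduce this directly from Lemma~\ref{lemma:wit-interleaving} by exhibiting a low-distortion correspondence between the two witness sets, exactly in the spirit of Corollary~\ref{cor:clas-cech-interleaving}. Fix $\e > 2\haus(W,W')$, so that $\haus(W,W') < \half\e$. First I would set
\[
C = \{ (w,w') : w \in W,\ w' \in W',\ d(w,w') < \half\e \},
\]
and check that $C$ is a correspondence $W \rightrightarrows W'$: for each $w \in W$ there is, since $\haus(W,W') < \half\e$, some $w' \in W'$ with $d(w,w') < \half\e$, so $C$ projects surjectively onto $W$; symmetrically it projects surjectively onto $W'$, so $C^T$ is also a multivalued map.

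Next I would bound the distortion. Here $\Lambda = d|_{L \times W}$ and $\Lambda' = d|_{L \times W'}$, so for any $l \in L$ and any $(w,w') \in C$ the triangle inequality in the ambient metric space gives $|\Lambda(l,w) - \Lambda'(l,w')| = |d(l,w) - d(l,w')| \leq d(w,w') < \half\e$. Taking suprema over $l \in L$ and $(w,w') \in C$ yields $\dis(C) \leq \half\e$, hence $2\dis(C) \leq \e$.

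Finally I would invoke Lemma~\ref{lemma:wit-interleaving} with this correspondence $C$ and this value of~$\e$: since $\e \geq 2\dis(C)$, the persistence modules $\Hgr(\Wit(L,W))$ and $\Hgr(\Wit(L,W'))$ are $\e$-interleaved, which is the claim.

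There is essentially no obstacle here; the corollary is a packaging of the lemma. The only point requiring a moment's care is the interplay between the strict and non-strict inequalities: the strict hypothesis $\e > 2\haus(W,W')$ is exactly what guarantees that $C$ really is a correspondence (surjective onto both factors), whereas only the non-strict bound $2\dis(C) \leq \e$ is needed to apply Lemma~\ref{lemma:wit-interleaving}. This is the same `strict versus non-strict' asymmetry that appears in the \vr, \vc, and Dowker interleaving statements, so no new ideas are involved.
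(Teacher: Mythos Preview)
Your proof is correct and follows essentially the same approach as the paper: define the correspondence $C = \{(w,w') \in W \times W' : d(w,w') < \half\e\}$, observe that $\dis(C) \leq \half\e$ by the triangle inequality, and apply Lemma~\ref{lemma:wit-interleaving}. The paper's version is more terse, but the content is identical.
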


\begin{proof}
Since $\haus(W,W') < \half \e$, the set
\[
C = \{ (w,w') \in W \times W' : d_X(w,w') < \half\e \}
\]
is a correspondence with $\dis(C) \leq \half\e$. Now apply Lemma~\ref{lemma:wit-interleaving}.
\end{proof}

Unfortunately, in full generality there is no equivalent of Lemma~\ref{lemma:wit-interleaving} in the case where the set $L$ is perturbed, even if the set of witnesses is constrained to stay fixed ($W=W'$). In contrast to ambient \vc\ complexes, the vertices in a witness complex interfere with one another. Here is an explicit counterexample:

\begin{example}
On the real line, consider the sets $W=L=\{0,1\}$ and
$L'=\{-\delta, 0, 1, 1+\delta\}$, where $\delta\in (0,1/2)$ is
arbitrary. Then
\[
\wit(L,W;a)=\{[0],\ [1],\ [0,1]\}
\]
for all $a\geq 0$, whereas
\[
\wit(L',W;a)
=\{[-\delta],\ [0],\ [1],\ [1+\delta],\ [-\delta,0],\ [1,1+\delta]\}
\]
for all $a\in [\delta,1-\delta)$.
Thus, $\Hgr(\Wit(L,W))$ and $\Hgr(\Wit(L',W))$ are not $\e$-interleaved for any $\e<1-2\delta$, whereas $\haus(L,L')=\delta$ can be made arbitrarily small compared to $1-2\delta$.
\end{example}

Note that the set of witnesses in this example is fairly sparse
compared to the set of landmarks. This raises several  interesting
questions, such as whether densifying $W$ (e.g. taking the full real
line) would allow to regain some stability. These questions lie beyond the scope of the paper.

\subsubsection{Generalisation to dissimilarity spaces}
In data analysis one often considers data sets~$X$ equipped with a dissimilarity measure, i.e.\ a map $\tilde d_X : X \times X \to \Rr$ that satisfies $\tilde d_X(x,x) \leq \tilde d_X(x,y) = \tilde d_X(y,x)$ for all $x,y\in X$ but is not required to satisfy any of the other metric space axioms.
It is easily seen that the definitions for \vr, \vc\ and witness complexes continue to make sense for such spaces, and that the distortion of a correspondence $C : X \rightrightarrows Y$ is well-defined. Moreover, since the proofs of our
interleaving results do not make use of any other distance axiom
(triangle inequality, non-negativity, zero property), they remain
valid in this more general context.

\section{Regularity of Rips and \vc\ filtrations}
\label{sec:regularity}

\subsection{Stability of Rips and \vc\ persistence for totally bounded spaces}
\label{subsec:precompact}

The stability theorem for persistent homology is often expressed in terms of persistence diagrams. In this section we show that the \vr\ and \vc\ complexes of a totally bounded metric space have sufficiently tame persistent homology that their persistence diagrams are well defined. There is a similar result for Dowker complexes. The interleaving results of the previous section immediately imply a stability theorem for the persistence diagrams.

We recall the definitions.
Given a positive real number $\e > 0$, a subset $F \subseteq X$ of a metric space $(X,d_X)$ is an \textbf{$\e$-sample} of~$X$ if for any $x \in X$ there exists $f \in F$ such that $d_X(x,f) < \e$. A metric space $(X,d_X)$ is \textbf{totally bounded}  if it has a finite  $\e$-sample for every $\e > 0$.
Bounded subsets of euclidean space are totally bounded. In general a metric space is totally bounded if and only if its completion is compact.

\begin{proposition}
\label{prop:precompact-tame}
If $(X,d_X)$ is a totally bounded metric space then the persistence modules $\Hgr(\Rips(X))$ and $\Hgr(\Cech(X))$ are q-tame. 
\end{proposition}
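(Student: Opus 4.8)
The plan is to use total boundedness to factor every internal map $v_a^b$ of the persistence module (for $a<b$) through the homology of a \emph{finite} simplicial complex, which is automatically finite-dimensional. Fix $a<b$ and set $\e = \tfrac{1}{4}(b-a) > 0$. Since $(X,d_X)$ is totally bounded, choose a finite $\e$-sample $F \subseteq X$ and let
\[
C = \{ (x,f) \in X \times F \,:\, d_X(x,f) < \e \}.
\]
Because $F$ is an $\e$-sample, $C$ projects surjectively onto $X$; because $(f,f) \in C$ for every $f\in F$, it also projects surjectively onto $F$; hence $C$ is a correspondence. For $(x,f),(x',f') \in C$ the triangle inequality gives $|d_X(x,x')-d_X(f,f')| \le d_X(x,f)+d_X(x',f') < 2\e$, so $\dis(C) \le 2\e = \tfrac{1}{2}(b-a)$.

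First I would run the argument for $\Rips$. By the computation in the proof of Lemma~\ref{lemma:rips-interleaving}, $C$ is $2\e$-simplicial from $\Rips(X)$ to $\Rips(F)$, and $C^T$ is $2\e$-simplicial from $\Rips(F)$ to $\Rips(X)$. Proposition~\ref{prop:e-simplicial1} then produces linear maps
\[
\Hgr(\rips(X,a)) \xrightarrow{\,\Hgr(C)\,} \Hgr(\rips(F,a+2\e)) \xrightarrow{\,\Hgr(C^T)\,} \Hgr(\rips(X,a+4\e)),
\]
and since $a+4\e = b$ the target of the composite is $\Hgr(\rips(X,b))$. As $\one_X \subseteq C^T \circ C$, Propositions~\ref{prop:e-simplicial2} and~\ref{prop:e-simplicial3} give $\Hgr(C^T)\circ\Hgr(C) = \Hgr(C^T\circ C) = \Hgr(\one_X)$, which at index $a$ is precisely the shift map $v_a^b$ (the identity $X\to X$ induces the inclusions $\rips(X,a)\hookrightarrow\rips(X,b)$). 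Hence $v_a^b$ factors through $\Hgr(\rips(F,a+2\e))$. Since $F$ is finite, $\rips(F,a+2\e)$ has finitely many simplices, so $\Hgr(\rips(F,a+2\e))$ is finite-dimensional, and therefore $\rank(v_a^b) \le \dim \Hgr(\rips(F,a+2\e)) < \infty$. As $a<b$ was arbitrary, $\Hgr(\Rips(X))$ is q-tame.

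For $\Hgr(\Cech(X))$ I would repeat the argument verbatim, substituting the computation in the proof of Lemma~\ref{lemma:cech-interleaving} for that of Lemma~\ref{lemma:rips-interleaving}: the same correspondence $C$ is $2\e$-simplicial from $\Cech(X)$ to $\Cech(F)$ and from $\Cech(F)$ back to $\Cech(X)$, so $v_a^b$ factors through the finite-dimensional space $\Hgr(\cech(F,a+2\e))$. I do not expect a genuine obstacle here; the only points needing attention are the bookkeeping of constants (picking $\e$ so that $a+4\e = b$) and the conceptual observation that q-tameness constrains only the ranks of the internal maps, \emph{not} the dimensions of the individual groups $\Hgr(\rips(X,a))$, which may well be infinite — so it really does suffice to interpose a single finite complex between parameters $a$ and $b$.
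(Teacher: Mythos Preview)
Your proof is correct and follows essentially the same route as the paper's: pick a finite sample $F$, form the obvious correspondence $C$, and use the $\e$-simplicial maps from the Rips/\vc\ interleaving lemmas to factor $v_a^b$ through the finite-dimensional homology of $\rips(F,\cdot)$ (resp.\ $\cech(F,\cdot)$). The only differences are cosmetic---the paper sets $\e=(b-a)/2$ with a $\tfrac{1}{2}\e$-sample where you set $\e=(b-a)/4$ with an $\e$-sample---and you spell out the appeal to Propositions~\ref{prop:e-simplicial1}--\ref{prop:e-simplicial3} a bit more explicitly than the paper does.
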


\begin{proof}
Let us first consider the case of the \vr\ persistence module. We must show that the map $I_a^b: \Hgr(\rips(X, a)) \to \Hgr(\rips(X, b))$ induced by the inclusion has finite rank whenever $a < b$. Let $\e = (b-a)/2$. Since $X$ is totally bounded there exists a finite $\half\e$-sample $F$ of $X$. 
The set $C = \{ (x,f) \in X \times F: d_X(x,f) < \half\e \}$ is an $\e$-corresponence, so the Gromov--Hausdorff distance between $F$ and $X$ is upper-bounded by $\half\e$. It follows from Lemma~\ref{lemma:rips-interleaving} that there exists an $\e$-interleaving between $\Hgr(\Rips(X))$ and $\Hgr(\Rips(F))$.
Using the interleaving maps, $I_a^b$ factorises as
\[
\Hgr(\rips(X,a))
\to \Hgr(\rips(F, a+\e))
\to \Hgr(\rips(X, a+2\e))
= \Hgr(\rips(X, b)).
\]
The second term is finite dimensional since $\rips(F; a+\e)$ is a finite simplicial complex, so $I_a^b$ has finite rank.

The proof for the \vc\ persistence module is the same.
\end{proof}

The above proposition implies that the persistence diagrams of $\Hgr(\Rips(X))$ and~$\Hgr(\Cech(X))$ are well-defined for totally bounded metric spaces. We may now apply the persistence stability theorem to get the following result, which relates the Gromov--Hausdorff distance between two spaces to the bottleneck distance between the persistence diagrams of their \vr\ and \vc\ filtrations. 

\begin{theorem} \label{thm:Rips-Cech-persistence-stability}
Let $X, Y$ be totally bounded metric spaces. Then
\begin{align*}
\bottle(
\dgm (\Hgr(\Rips(X))), 
\dgm (\Hgr(\Rips(Y)))
)
&\leq
2 \gh(X, Y).
\\
\bottle(
\dgm (\Hgr(\Cech(X))), 
\dgm (\Hgr(\Cech(Y)))
)
&\leq
2 \gh(X, Y),
\end{align*}
\end{theorem}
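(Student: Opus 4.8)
The plan is to combine the two ingredients that have just been assembled: the interleaving lemmas of Section~\ref{sec:correspondences} and the q-tameness established in Proposition~\ref{prop:precompact-tame}. Concretely, fix $\e > 2\gh(X,Y)$. By Lemma~\ref{lemma:rips-interleaving} the persistence modules $\Hgr(\Rips(X))$ and $\Hgr(\Rips(Y))$ are $\e$-interleaved, and by Lemma~\ref{lemma:cech-interleaving} the same is true for $\Hgr(\Cech(X))$ and $\Hgr(\Cech(Y))$. Since $X$ and $Y$ are totally bounded, Proposition~\ref{prop:precompact-tame} tells us that all four modules are q-tame, so each has a well-defined persistence diagram by the first part of Theorem~\ref{thm:stability2}.

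Next I would invoke the second part of Theorem~\ref{thm:stability2}: two q-tame modules that are $\e$-interleaved admit an $\e$-matching of their persistence diagrams, hence their bottleneck distance is at most~$\e$. Applying this to each of the two pairs gives
\[
\bottle(\dgm(\Hgr(\Rips(X))),\dgm(\Hgr(\Rips(Y)))) \leq \e,
\qquad
\bottle(\dgm(\Hgr(\Cech(X))),\dgm(\Hgr(\Cech(Y)))) \leq \e,
\]
for every $\e > 2\gh(X,Y)$. Letting $\e$ decrease to $2\gh(X,Y)$ yields both displayed inequalities of the theorem. (In the degenerate case $\gh(X,Y) = \infty$ there is nothing to prove.)

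There is essentially no obstacle here: the theorem is a formal corollary of the work already done, and the only thing to be careful about is the passage to the infimum, i.e.\ that the bound holds for all $\e$ strictly above $2\gh(X,Y)$ rather than at the value itself — this is handled by the standard observation that $\bottle(\cdot,\cdot)\le\e$ for all $\e>c$ implies $\bottle(\cdot,\cdot)\le c$. If I wanted to state a third conclusion for Dowker complexes (as the preamble to the section hints), I would additionally need a q-tameness statement for $\Hgr(\Dowk(\Lambda))$ under a suitable finiteness hypothesis on $(L,W,\Lambda)$ and then pair it with Lemma~\ref{lemma:ex-cech-interleaving}; but for the Rips and \vc\ cases as stated, the two lemmas plus Proposition~\ref{prop:precompact-tame} and Theorem~\ref{thm:stability2} suffice.
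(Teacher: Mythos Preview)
Your argument is correct and matches the paper's own proof, which simply says the theorem is a consequence of Theorem~\ref{thm:stability2} together with Lemmas~\ref{lemma:rips-interleaving} and~\ref{lemma:cech-interleaving} (with q-tameness supplied by Proposition~\ref{prop:precompact-tame} just above). Your extra care about passing to the infimum over $\e > 2\gh(X,Y)$ is a welcome detail that the paper leaves implicit.
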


\begin{proof}
This is a consequence of Theorem~\ref{thm:stability2} and Lemmas \ref{lemma:rips-interleaving} and~\ref{lemma:cech-interleaving}.
\end{proof}

\begin{remark}
The first inequality of Theorem~\ref{thm:Rips-Cech-persistence-stability} was earlier proved in~\cite{ccgmo-ghsssp-09} in the special case of finite metric spaces, using a different approach based on embedding the spaces into~$l^\infty$ and invoking the functorial Nerve Lemma.
\end{remark}

Here are the corresponding results for ambient \vc\ complexes.

\begin{proposition}
\label{prop:ambient-tame}
Let $L, W$ be subsets of a metric space. If at least one of $L, W$ is totally bounded, then the ambient \vc\ persistence $\Hgr(\Cech(L,W))$ is q-tame.
\end{proposition}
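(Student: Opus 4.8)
The plan is to handle the case of totally bounded $L$ exactly as in the proof of Proposition~\ref{prop:precompact-tame}, and then to reduce the case of totally bounded $W$ to it by means of Dowker duality.

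First suppose $L$ is totally bounded. Fix $a < b$ and set $\e = \half(b-a)$. I would choose a finite $\half\e$-sample $L' \subseteq L$, so that $\haus(L,L') \leq \half\e < \e$, and then invoke Corollary~\ref{cor:clas-cech-interleaving} to obtain an $\e$-interleaving between $\Hgr(\Cech(L,W))$ and $\Hgr(\Cech(L',W))$. Composing the two interleaving homomorphisms, the map $I_a^b$ induced by the inclusion $\cech(L,W;a) \hookrightarrow \cech(L,W;b)$ factors as
\[
\Hgr(\cech(L,W;a)) \to \Hgr(\cech(L',W;a+\e)) \to \Hgr(\cech(L,W;a+2\e)) = \Hgr(\cech(L,W;b)).
\]
The middle term is finite-dimensional, because $\cech(L',W;a+\e)$ is a simplicial complex on the finite vertex set~$L'$ and therefore has only finitely many simplices (no hypothesis on $W$ is needed here). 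Hence $\rank(I_a^b) < \infty$, which is precisely q-tameness of $\Hgr(\Cech(L,W))$.

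Now suppose instead that $W$ is totally bounded. The difficulty is that the vertex set~$L$ need not be totally bounded, so the factorisation above is unavailable; this is the only real obstacle, and I would circumvent it using Dowker duality (Remark~\ref{rem:dowker}). Writing $\Lambda = d|_{L\times W}$, we have $\Cech(L,W) = \Dowk(\Lambda)$ and $\Cech(W,L) = \Dowk(\Lambda^T)$, and Dowker duality supplies an isomorphism of persistence modules $\Hgr(\Cech(L,W)) \cong \Hgr(\Cech(W,L))$. Since $W$ is totally bounded, the first case --- applied with the roles of the landmark and witness sets interchanged --- shows that $\Hgr(\Cech(W,L))$ is q-tame. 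As q-tameness is a condition on the ranks of the structure maps alone, it is preserved under this isomorphism, and therefore $\Hgr(\Cech(L,W))$ is q-tame as well.
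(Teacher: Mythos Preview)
Your proof is correct and follows essentially the same approach as the paper: approximate the totally bounded landmark set by a finite sample, use Corollary~\ref{cor:clas-cech-interleaving} to obtain an interleaving with a finite complex, and then invoke Dowker duality (Remark~\ref{rem:dowker}) to handle the case where $W$ is totally bounded. The paper's write-up is slightly terser (it simply notes that an $\e$-interleaving with a finite complex exists for every $\e>0$), while you spell out the factorisation of $I_a^b$ explicitly in the style of Proposition~\ref{prop:precompact-tame}, but the underlying argument is the same.
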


\begin{proof}
We assume that $L$ is totally bounded.
The case where $W$ is totally bounded follows by Dowker duality (Remark~\ref{rem:dowker}).
It is enough to show for every $\e > 0$ that $\Hgr(\Cech(L,W))$ is $\e$-interleaved with the persistent homology of a finite complex.
To do this, let $F$ be a finite $\e$-sample of~$L$, so that $\haus(L,F) \leq \e$. Then $\Hgr(\Cech(L,W))$ and $\Hgr(\Cech(F,W))$ are $\e$-interleaved by Corollary~\ref{cor:clas-cech-interleaving}, and $\Cech(F,W)$ is finite as required.
\end{proof}

\begin{remark}
The hypothesis in the proposition is most easily checked when the ambient metric space is a `proper' space, meaning that its closed balls are compact. (For instance, $\Rr^n$ is proper.) Then a subset~$L$ is totally bounded if and only if it is bounded.
\end{remark}

Proposition~\ref{prop:ambient-tame} implies that the persistence diagram $\dgm(\Hgr(\Cech(L,W)))$ is well defined when at least one of $L,W$ is totally bounded.

\begin{theorem}
Let $L, L'$ and $W$ be subsets of a metric space. Suppose $L, L'$ are totally bounded, or that $W$ is totally bounded. Then
\[
\bottle(
\dgm(\Hgr(\Cech(L,W))),
\dgm(\Hgr(\Cech(L',W)))
)
\leq
\haus(L,L')
\]
\end{theorem}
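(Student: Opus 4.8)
The plan is to combine the interleaving result for ambient \v{C}ech complexes (Corollary~\ref{cor:clas-cech-interleaving}) with the q-tameness guaranteed by Proposition~\ref{prop:ambient-tame}, and then feed these into the abstract stability theorem (Theorem~\ref{thm:stability2}). This is the same template used to prove Theorem~\ref{thm:Rips-Cech-persistence-stability}, so the proof should be short.

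First I would observe that under either hypothesis the persistence modules $\Hgr(\Cech(L,W))$ and $\Hgr(\Cech(L',W))$ are both q-tame, hence have well-defined persistence diagrams: if $L$ and $L'$ are both totally bounded this is immediate from Proposition~\ref{prop:ambient-tame} applied to each, while if $W$ is totally bounded it again follows from Proposition~\ref{prop:ambient-tame} (the second clause of its hypothesis) applied to each of the two pairs. Next, for any $\e > \haus(L,L')$, Corollary~\ref{cor:clas-cech-interleaving} gives an $\e$-interleaving between $\Hgr(\Cech(L,W))$ and $\Hgr(\Cech(L',W))$. Theorem~\ref{thm:stability2} then yields $\bottle(\dgm(\Hgr(\Cech(L,W))), \dgm(\Hgr(\Cech(L',W)))) \leq \e$. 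Since this holds for every $\e > \haus(L,L')$, taking the infimum over such $\e$ gives the claimed bound $\bottle(\dgm(\Hgr(\Cech(L,W))), \dgm(\Hgr(\Cech(L',W)))) \leq \haus(L,L')$.

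There is really no serious obstacle here; the only point requiring a moment's care is the bookkeeping on which spaces need to be totally bounded in order to invoke Proposition~\ref{prop:ambient-tame} twice (once for each of the two complexes), and noting that the hypothesis of the theorem — $L, L'$ both totally bounded, or $W$ totally bounded — is exactly what is needed in both cases. The passage from the family of bounds indexed by $\e > \haus(L,L')$ to the bound at $\haus(L,L')$ itself is the standard infimum argument and is the only step that is not a direct citation.

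\begin{proof}
Under the stated hypothesis, Proposition~\ref{prop:ambient-tame} applies to both pairs $(L,W)$ and $(L',W)$: either $L$ and $L'$ are each totally bounded, or $W$ is totally bounded, and in either case the hypothesis of Proposition~\ref{prop:ambient-tame} is met for each pair. Hence $\Hgr(\Cech(L,W))$ and $\Hgr(\Cech(L',W))$ are q-tame and have well-defined persistence diagrams.

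Now let $\e > \haus(L,L')$ be arbitrary. By Corollary~\ref{cor:clas-cech-interleaving}, the persistence modules $\Hgr(\Cech(L,W))$ and $\Hgr(\Cech(L',W))$ are $\e$-interleaved. Since both modules are q-tame, Theorem~\ref{thm:stability2} gives
\[
\bottle(
\dgm(\Hgr(\Cech(L,W))),
\dgm(\Hgr(\Cech(L',W)))
)
\leq \e.
\]
As this holds for every $\e > \haus(L,L')$, taking the infimum over such~$\e$ yields the desired inequality.
\end{proof}
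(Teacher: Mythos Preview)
Your proof is correct and follows exactly the approach the paper takes: the paper's own proof is the single sentence ``This is a consequence of Theorem~\ref{thm:stability2} and Corollary~\ref{cor:clas-cech-interleaving},'' with q-tameness (via Proposition~\ref{prop:ambient-tame}) having been noted just before the theorem statement. Your version simply spells out the q-tameness check and the infimum-over-$\e$ step explicitly.
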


\begin{proof}
This is a consequence of Theorem~\ref{thm:stability2} and Corollary~\ref{cor:clas-cech-interleaving}.
\end{proof}

We finish with a tameness result for Dowker complexes.

\begin{proposition}
\label{prop:gen-tame}
Let $L, W$ be sets and $\Lambda: L \times W \to \Rr$ be a function. Suppose the collection $( \lambda_l )_{l \in L}$ of functions
$
\lambda_l(w) = \Lambda(l,w)
$
is bounded and totally bounded with respect to the supremum norm on functions $W \to \Rr$.
Then $\Hgr(\Dowk(\Lambda))$ is q-tame.
\end{proposition}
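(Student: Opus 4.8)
The idea is to mimic the proof of Proposition~\ref{prop:precompact-tame}: show that for any $a < b$ the map $I_a^b : \Hgr(\dowk(\Lambda,a)) \to \Hgr(\dowk(\Lambda,b))$ factors through the persistent homology of a \emph{finite} Dowker complex, using a totally bounded net in the hypothesis together with the Dowker interleaving lemma (Lemma~\ref{lemma:ex-cech-interleaving}). Fix $a < b$ and set $\e = (b-a)/2$. By hypothesis the set $\{ \lambda_l : l \in L \} \subseteq \Rr^W$ is totally bounded in the supremum norm, so it admits a finite $\e$-net; concretely, there is a finite subset $L_0 \subseteq L$ such that for every $l \in L$ there is some $l_0 \in L_0$ with $\sup_{w \in W} |\Lambda(l,w) - \Lambda(l_0,w)| < \e$. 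Here I am using that an $\e$-net for the whole set can be taken \emph{among the points of the set itself} (passing from an abstract $\tfrac{\e}{2}$-net in $\Rr^W$ to a subset of $L$ costs another $\tfrac{\e}{2}$), which is where boundedness/total boundedness of the family is used.

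\textbf{Key steps.} First I would define the correspondence $C : L \rightrightarrows L_0$ by $C = \{ (l, l_0) : \sup_{w} |\Lambda(l,w) - \Lambda(l_0,w)| < \e \}$; this projects onto $L$ because $L_0$ is an $\e$-net, and onto $L_0$ because $L_0 \subseteq L$ and each $l_0$ is within distance $0 < \e$ of itself. Let $D = \one_W$ be the identity correspondence on $W$, and let $\Lambda_0 = \Lambda|_{L_0 \times W}$. Then $\dis(C, D) = \sup_{(l,l_0) \in C} \sup_{w \in W} |\Lambda(l,w) - \Lambda_0(l_0,w)| \leq \e$ directly from the definition of $C$. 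By Lemma~\ref{lemma:ex-cech-interleaving}, $\Hgr(\Dowk(\Lambda))$ and $\Hgr(\Dowk(\Lambda_0))$ are $\e$-interleaved. Now factor $I_a^b$ through the interleaving maps exactly as in Proposition~\ref{prop:precompact-tame}:
\[
\Hgr(\dowk(\Lambda, a))
\to \Hgr(\dowk(\Lambda_0, a+\e))
\to \Hgr(\dowk(\Lambda, a+2\e))
= \Hgr(\dowk(\Lambda, b)).
\]
The middle group is finite-dimensional because $\dowk(\Lambda_0, a+\e)$ is a complex on the finite vertex set $L_0$, hence has finitely many simplices; therefore $I_a^b$ has finite rank. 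Since $a < b$ were arbitrary, $\Hgr(\Dowk(\Lambda))$ is q-tame.

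\textbf{Main obstacle.} The only genuinely delicate point is producing the finite net $L_0$ \emph{inside} $L$ rather than in the ambient space $\Rr^W$: total boundedness of the family $(\lambda_l)_{l \in L}$ gives a finite $\tfrac{\e}{2}$-net, but \emph{a priori} the net points need not be of the form $\lambda_l$. One replaces each net point by a nearby $\lambda_l$ (possible since the net points are within $\tfrac{\e}{2}$ of the set $\{\lambda_l\}$, which is where we also invoke that the family is bounded, guaranteeing the net is finite and lies at finite distance), obtaining a finite $L_0 \subseteq L$ that is an $\e$-net for the family. Everything else is a direct transcription of the argument already given for the Rips and \vc\ cases, with Lemma~\ref{lemma:ex-cech-interleaving} playing the role that Lemma~\ref{lemma:rips-interleaving} played there; and the whole discussion is insensitive to metric axioms, consistent with the remark on dissimilarity spaces.
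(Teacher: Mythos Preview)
Your proof is correct and follows essentially the same route as the paper: choose a finite $\e$-net $L_0 \subseteq L$ for the family $(\lambda_l)$ in the sup norm, build the correspondence $C = \{(l,l_0) : \|\lambda_l - \lambda_{l_0}\|_\infty < \e\}$ together with $D = \one_W$, observe $\dis(C,D) \leq \e$, and invoke Lemma~\ref{lemma:ex-cech-interleaving} to interleave with the finite complex $\Dowk(\Lambda|_{L_0 \times W})$. One small remark: your discussion of where the \emph{boundedness} hypothesis enters is slightly off---total boundedness already forces the $\tfrac{\e}{2}$-net to be finite, and the replacement-by-nearby-$\lambda_l$ step needs nothing beyond that; the separate boundedness assumption in the statement is really there so that the supremum norm takes finite values (i.e.\ so that the $\lambda_l$ live in $\ell^\infty(W)$ at all), not to make the net argument work.
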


\begin{remark}
Dowker duality implies that the same conclusion holds if the roles of $L,W$ are interchanged in the hypothesis.
\end{remark}

\begin{proof}
As before, it is enough to show that for any $\e > 0$ the persistence module $\Hgr(\Dowk(\Lambda))$ is $\e$-interleaved with the persistent homology of a finite complex.
To do this, let $F$ be a finite subset of~$L$ such that $(\lambda_l)_{l \in F}$ is an $\e$-sample of $(\lambda_l)_{l \in L}$, and let $\Lambda_F$ be the restriction of $\Lambda$ to $W \times F$.
Then
\begin{align*}
C
&= \{ (l, l') \mid \text{$l \in L$, $l' \in F$ and $\| \lambda_l - \lambda_{l'} \|_\infty < \e$} \}
\\
D &= \{ (w,w) \mid w \in W \}
\end{align*}
are correspondences $L \rightrightarrows F$ and $W \rightrightarrows W$, respectively, with $\dis(C,D) \leq \e$.
It follows from Lemma~\ref{lemma:ex-cech-interleaving} that $\Hgr(\Dowk(\Lambda))$ and $\Hgr(\Dowk(\Lambda_F))$ are $\e$-interleaved, with $\Dowk(\Lambda_F)$ being finite as required.
\end{proof}

One can most straightforwardly use Proposition~\ref{prop:gen-tame} in situations where the Arzel\`{a}--Ascoli theorem guarantees total boundedness. For instance, if $L, W$ are totally bounded metric spaces and $\Lambda$ is Lipschitz then $\Hgr(\Dowk(\Lambda))$ is q-tame.

\subsection{Non-persistent homology of Rips and \vc\ complexes}
\label{subsec:nonpersistent}

The good behaviour of Rips and \vc\ filtrations on compact metric spaces in their {persistent} homology stands in marked contrast to bad behaviour that can be found in the homology groups at particular parameter values. 
Whereas the persistence modules $\Hgr(\Cech(X))$ and $\Hgr(\Rips(X))$ are q-tame when $X$ is a totally bounded metric space, the individual homology groups $\Hgr(\cech(X,a))$ and $\Hgr(\rips(X,a))$ may well be infinite dimensional for some or many values of~$a$.

In the next few sections we present both positive and negative results in this direction.

We briskly remark that homology in dimension zero is easily handled when $X$ is totally bounded: a generating set for both $\Hgr_0(\cech(X,a))$ and $\Hgr_0(\rips(X,a))$ is provided by any $a$-sample of~$X$, so these vector spaces are finite-dimensional when $a > 0$.

\subsubsection{The homology groups of a Rips filtration}
\label{subsub:vr-examples}
It is easy to construct an example of a compact metric space $X$ such that the homology group $\Hgr_1(\rips(X,1))$ has an uncountable infinite dimension. For example consider the union $X$ of two parallel segments in $\Rr^2$ defined by
\[
X = \{ (t,0) \in \Rr^2 : t \in [0,1] \} \cup \{ (t,1) \in \Rr^2 : t \in [0,1] \}
\]
with metric restricted from the euclidean metric in~$\Rr^2$. Then for any $t \in [0,1]$, the edge $e_t = [(t,0), (t,1)]$ belongs to $\rips(X,1)$ but there is no triangle in $\rips(X,1)$ that contains $e_t$ in its boundary. As a consequence, for $t \in (0,1]$ the cycles $\gamma_t = [(0,0), (t,0)] + e_t + [(t,1),(0,1)] - e_0$ are not homologous to $0$ and are linearly independent in $\Hgr_1(\rips(X,1))$.

Here, $a=1$ is the only value of the Rips parameter for which the homology group $\Hgr_1(\rips(X,a))$ fails to be finite-dimensional. In fact, it is possible to construct examples where the set of `bad' values is arbitrarily large.

\begin{proposition} \label{lemma:large-range-infinite-H1-rips}
For any $\alpha, \beta \in \Rr$ such that $0 < \alpha \leq \beta$ and any integer~$k$ there exists a compact metric space~$X$ such that for any $a \in [\alpha,\beta]$, $\Hgr_k(\rips(X,a))$ has an uncountable infinite dimension.
\end{proposition}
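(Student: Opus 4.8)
The plan is to upgrade the two-segments example from the preceding discussion so that the "bad" edges persist over an entire interval of scales rather than at a single scale. The obstruction to bounding a one-cycle in $\rips(X,a)$ was that certain edges $e_t$ appear in the complex but lie in no triangle; to make this happen for all $a\in[\alpha,\beta]$ simultaneously, I want a one-parameter family of edges each of which is present throughout $[\alpha,\beta]$ but is never filled in by a triangle during that range. First I would build the space as a disjoint-looking pair of pieces glued appropriately. Concretely, take two arcs $A_0$ and $A_1$ (say, isometric copies of an interval $[0,L]$, or circular arcs if one prefers more symmetry) and embed them into some Euclidean (or abstract metric) space so that: (i) for each $t$, the two "partner" points $p_0(t)\in A_0$ and $p_1(t)\in A_1$ are at distance exactly $\alpha$ (so the bridging edge $e_t=[p_0(t),p_1(t)]$ enters $\rips(X,a)$ precisely at $a=\alpha$); (ii) any two points both lying in $A_0$, or both in $A_1$, that are far apart along the arc remain at metric distance $>\beta$; and (iii) for $t\neq s$, the cross-distances $d(p_0(t),p_1(s))$ are all $>\beta$, so that no third vertex can be within distance $\beta$ of both endpoints of $e_t$ — hence $e_t$ lies in no triangle of $\rips(X,\beta)$, and a fortiori none of $\rips(X,a)$ for $a\le\beta$.

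The concrete realisation of (i)–(iii) is the step I expect to be the main obstacle, since the constraints pull in different directions: the partner points must be uniformly close (distance $\alpha$) while every other relevant pair must be uniformly far (distance $>\beta$), and we need this for an uncountable family. One clean way is to use a high-dimensional construction: place the two arcs in $\ell^\infty$ or in a sufficiently high-dimensional Euclidean space, parametrising $A_0$ by $t\mapsto (\phi(t), 0)$ and $A_1$ by $t\mapsto(\phi(t), \alpha)$ where $\phi:[0,1]\to \Rr^N$ is a curve whose points are mutually at distance $>\beta$ (easy to arrange, e.g. an almost-orthogonal system, or simply spreading points out in enough dimensions), and the last coordinate is the constant "height" shift creating the distance-$\alpha$ partners. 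Then $d(p_0(t),p_1(t))=\alpha$, while $d(p_0(t),p_0(s))=\|\phi(t)-\phi(s)\|>\beta$ and $d(p_0(t),p_1(s))=\sqrt{\|\phi(t)-\phi(s)\|^2+\alpha^2}>\beta$ for $t\ne s$. Compactness of $X$ is then arranged by taking $\phi$ to have compact (hence totally bounded, and closed) image — e.g. a continuous injective curve — so that $X=p_0([0,1])\cup p_1([0,1])$ is compact.

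With the space in hand, I would fix any $a\in[\alpha,\beta]$ and exhibit uncountably many linearly independent classes in $\Hgr_1(\rips(X,a))$. Choose a basepoint, say $p_0(0)$, and for each $t\in(0,1]$ form the loop $\gamma_t$ running from $p_0(0)$ along a chain in $A_0$ to $p_0(t)$, across the edge $e_t$, back along a chain in $A_1$ to $p_1(0)$, and across $e_0$ back to the basepoint — exactly as in the two-segment example, except that "along $A_0$" now means along a fine polygonal path in the arc, every consecutive pair of whose vertices is within distance $a$ (possible since $A_0$ is a path and $a>0$). The key point is that $e_t$ appears in no $2$-simplex of $\rips(X,a)$ (by (iii), since $a\le\beta$), so in the chain complex the coefficient of $e_t$ is a well-defined linear functional on cycles that vanishes on boundaries; since $\gamma_t$ has $e_t$-coefficient $1$ and $e_s$-coefficient $0$ for $s\notin\{0,t\}$, the classes $[\gamma_t]$ are linearly independent in $\Hgr_1(\rips(X,a))$ over the uncountable index set $t\in(0,1]$. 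For $k>1$, I would replace each bridging edge by a $k$-dimensional "unfillable" sphere-like gadget — e.g. take a product-type or suspension-type construction, or more simply replace the pair of points $\{p_0(t),p_1(t)\}$ at the "top" of each loop by a $k$-fold configuration realising an empty $\Hgr_k$ — and run the same argument that the relevant top-cells lie in no $(k{+}1)$-simplex of $\rips(X,a)$ for $a\le\beta$; the bookkeeping here is the only genuinely new work beyond the $k=1$ case, and I expect it to go through by the same "coefficient of an unfilled top-cell" device once the gadget is chosen so that its diameter stays $\le\alpha$ while its separation from everything else stays $>\beta$.
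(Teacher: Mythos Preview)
There is a genuine gap: the constraints you impose on $\phi$ are mutually inconsistent, so the construction cannot produce a compact space with the properties you need. You ask for a map $\phi:[0,1]\to\Rr^N$ (or into $\ell^\infty$) with \emph{all} distinct pairs at distance $>\beta$, and at the same time you want $\phi$ continuous with compact image so that $X$ is compact and so that you can run chains inside $A_0$. But an uncountable set whose pairwise distances are bounded below by a positive constant is not totally bounded, hence not compact; and continuity of $\phi$ forces $\|\phi(s)-\phi(t)\|\to 0$ as $s\to t$, so $d(p_0(s),p_1(t))\to d(p_0(t),p_1(t))=\alpha\le\beta$. Thus for $s$ near $t$ the triangle $[p_0(t),p_1(t),p_0(s)]$ lies in $\rips(X,\beta)$ and your bridging edge $e_t$ \emph{is} filled in. Condition~(iii) therefore fails for any continuous $\phi$, and if you drop continuity you lose both compactness and the ``chain in $A_0$'' step that your cycles $\gamma_t$ require. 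In short, once $\alpha<\beta$ there is no single uncountable family of bridging edges that stays unfilled throughout $[\alpha,\beta]$ in a compact space of this shape.

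The paper sidesteps this tension by adding one more real parameter rather than insisting that the same edges work for every $a$. It takes two \emph{non-parallel} rectangles in $(\Rr^3,\ell^1)$ so that the distance between matched points on the two sheets varies continuously with one coordinate~$t$; for each fixed $a\in[\alpha,\beta]$ one then selects the slice $t=t(a)$ where that distance equals exactly~$a$, and the remaining coordinate~$z$ provides the uncountable family of edges, each with a \emph{unique} nearest point on the opposite sheet and hence lying in no triangle of $\rips(X,a)$. For $k>1$ the paper takes the product with a $(k-1)$-sphere of large radius and applies the K\"unneth formula, which is shorter and more robust than the ad~hoc ``unfillable gadget'' replacement you sketch.
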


\begin{proof}
The following example was obtained with the help of J.-M. Droz who also proved that a similar example can be realised as a subset of $\Rr^4$ endowed with the euclidean metric \cite{d-seslv-12}.

Without loss of generality, we can assume that $\alpha = 1$ and $\beta = 2$. Let us first consider the case $k=1$. Consider the union $X$ of two non-parallel rectangles in~$\Rr^3$, defined as
\begin{eqnarray*}
X =  R_1 \cup R_2 
& = &
\left\{ (t,0,z) \in \Rr^2 : t \in [0,2], z \in [0,1] \right\}
\\
&& \qquad \cup
\left\{ (t,1+\half t,z) \in \Rr^2 : t \in [0,2], z \in [0,1] \right\} 
\end{eqnarray*}
and endowed with the restriction of the $\ell^1$-norm in $\Rr^3$ (see Figure \ref{fig:bigrips_example}). 

\begin{figure}
\centering{
\includegraphics[scale=0.63]{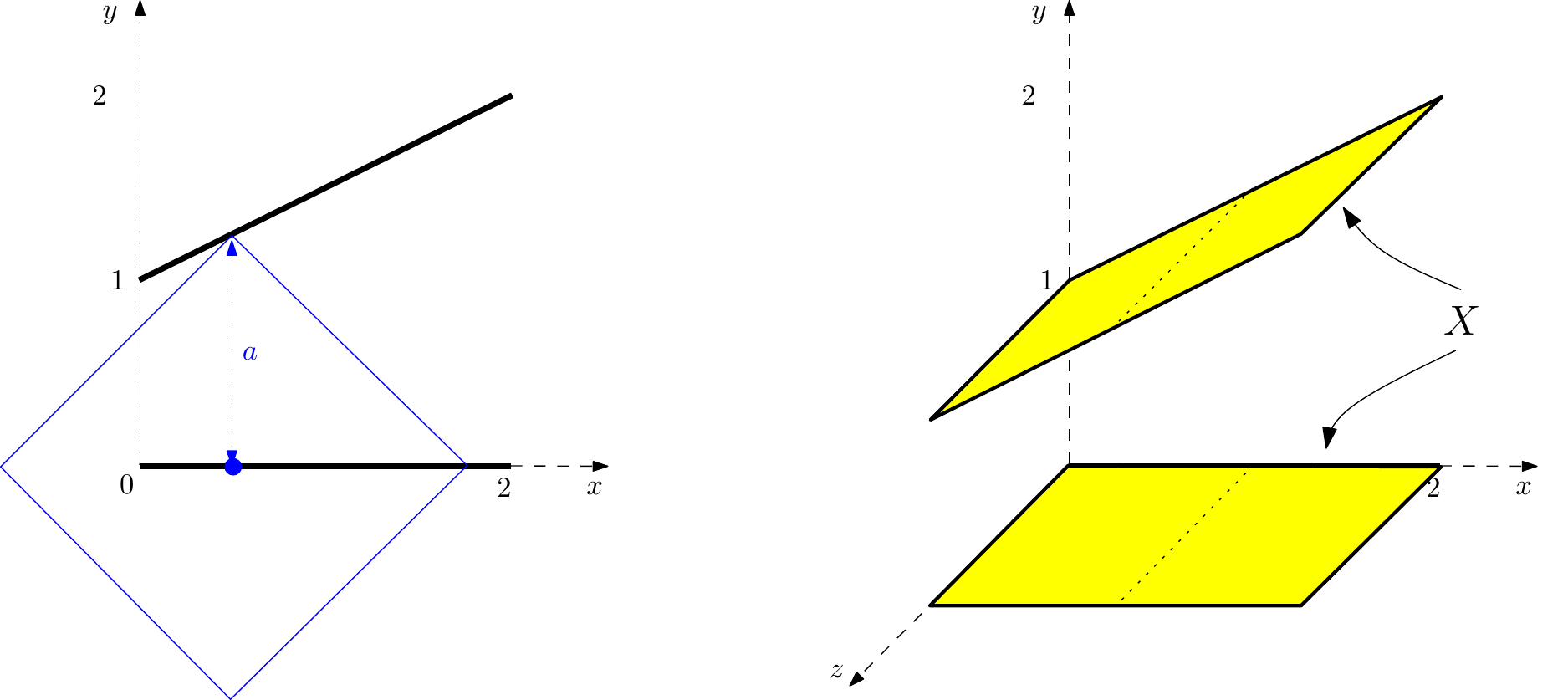}
}
\caption{The union $X$ of the above 2 rectangles endowed with the restriction of the $L^1$ metric in $\Rr^3$ provides an example of a compact metric space such that $\dim \Hgr_1(\rips(X,a)) = \infty$ for any $a \in [1,2]$.}
\label{fig:bigrips_example}
\end{figure}

Since we are using the $\ell^1$-norm, for $a \in [1,2]$ and $z \in [0,1]$, the point $(2(a-1),0,z) \in R_1$ is at distance $a$ from $R_2$ and its unique closest point on $R_2$ is $(2(a-1),a,z)$. As a consequence $e_z = [(2(a-1),0,z),(2(a-1),a,z)]$ is an edge of $\rips(X,a)$ for all $z \in [0,1]$ but there is no (non degenerate) triangle in $\rips(X,a)$ that contains $e_z$ in its boundary. 
Therefore, for $z \in (0,1]$, the cycles $\gamma_z = [(2(a-1),0,0),(2(a-1),0,z)] + e_z + [(2(a-1),a,z),(2(a-1),a,0)] - e_0$ are not homologous to $0$ and are linearly independent in $\Hgr_1(\rips(X,a))$.

To prove the lemma for $k>1$, just consider the product of $X$ with a
$(k-1)$-dimensional sphere of sufficiently large radius (to prevent
the Rips construction from killing the ($k-1$)-homology), and apply the
K\" unneth formula \cite[Theorem 3.16, p.219]{h-at-01}.
\end{proof}

\subsubsection{The open \vr\ filtration}

The examples given above, of \vr\ complexes with infinite-dimensional homology, rely strongly on the fact that $\rips(X,a)$ is defined using a non-strict inequality; that is to say, a closed condition: $[x_0, x_1, \dots, x_k] \in \rips(X, a)$ if and only if $d_X(x_i, x_j) \leq a$ for all $i,j$.

It is natural to ask what happens if strict inequality---an open condition---is used. Given a metric space $(X,d_X)$ and a real number $a \in \Rr$, the open \vr\ complex is the simplicial complex $\rips(X,a^{-})$ with vertex set~$X$ defined by the following condition:
\[
[x_0, x_1, \dots, x_k] \in \rips(X, a^{-})
\; \Leftrightarrow \;
d_X(x_i, x_j) < a,\,
\mbox{for all $i,j$.}
\]

The reader may easily confirm that the examples in section~\ref{subsub:vr-examples} dissolve when the open condition is used. The existence of other constructions is constrained by the following mild regularity result.

\begin{proposition}
\label{prop:open-countable}
For any totally bounded metric space $X$ and real number $a>0$, the total homology $\Hgr(\rips(X,a^{-}))$ has a countable basis. 
\end{proposition}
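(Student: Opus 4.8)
The plan is to exhaust the open Rips complex by finite subcomplexes coming from finite samples, and to show that the homology of the whole complex is the direct limit of the homology of these finite pieces — a colimit of a countable directed system of finite-dimensional spaces, hence countably generated. The key observation is that any simplex $\sigma = [x_0, \dots, x_k]$ of $\rips(X, a^-)$ has \emph{strict} diameter bound: $\max_{i,j} d_X(x_i, x_j) = a - \eta$ for some $\eta = \eta(\sigma) > 0$. So each simplex already lies in $\rips(X, (a-\eta)^-) \subseteq \rips(X, (a - \eta)) \subseteq \rips(X, a^-)$. This gives a cofinal family: $\rips(X, a^-) = \bigcup_{n \geq 1} \rips(X, b_n)$ for any sequence $b_n \nearrow a$ with $b_n < a$, since every simplex is captured at some finite stage.

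First I would fix such a sequence $b_1 < b_2 < \cdots$ with $b_n \to a$, and observe that the inclusions $\rips(X, b_n) \hookrightarrow \rips(X, b_{n+1}) \hookrightarrow \rips(X, a^-)$ exhibit $\rips(X, a^-)$ as the union (equivalently, the direct limit in the category of simplicial complexes) of the increasing chain $(\rips(X, b_n))_n$. Next I would invoke the fact that simplicial homology commutes with such directed unions: $\Hgr(\rips(X, a^-)) = \varinjlim_n \Hgr(\rips(X, b_n))$. Concretely, every homology class in $\Hgr(\rips(X, a^-))$ is represented by a finite chain, whose finitely many simplices all lie in some $\rips(X, b_n)$, so the class is in the image of $\Hgr(\rips(X, b_n)) \to \Hgr(\rips(X, a^-))$; hence the images of these maps, over all $n$, cover $\Hgr(\rips(X, a^-))$.

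Then I would bring in total boundedness. For each fixed $n$, since $b_n < a$ we have in particular $b_n > 0$ is not needed — what matters is that we can compare $\rips(X, b_n)$ against a finite Rips complex. Using the same interleaving argument as in Proposition~\ref{prop:precompact-tame}: pick a finite $\delta$-sample $F_n$ of $X$ with $\delta$ small enough that $b_n + 2\delta < b_{n+1}$ (possible since $b_n < b_{n+1}$), so that the inclusion-induced map $\Hgr(\rips(X, b_n)) \to \Hgr(\rips(X, b_{n+1}))$ factors through the finite-dimensional space $\Hgr(\rips(F_n, b_n + \delta))$. Consequently the image of $\Hgr(\rips(X, b_n)) \to \Hgr(\rips(X, a^-))$ — which factors through $\Hgr(\rips(X, b_{n+1}))$, hence through that finite-dimensional space — is finite-dimensional. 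Therefore $\Hgr(\rips(X, a^-))$ is the union of a countable family of finite-dimensional subspaces, so it admits a countable spanning set and hence a countable basis.

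The main obstacle, and the only point requiring care, is the factorization-through-a-finite-complex step: one must choose the finite sample $F_n$ with mesh fine enough relative to the \emph{gap} $b_{n+1} - b_n$, not relative to $a$, so that the $\e$-interleaving from Lemma~\ref{lemma:rips-interleaving} with $\e = \delta$ lands the composite $\Hgr(\rips(X,b_n)) \to \Hgr(\rips(F_n, b_n+\delta)) \to \Hgr(\rips(X, b_n + 2\delta))$ inside $\Hgr(\rips(X, b_{n+1}))$. Since the gaps $b_{n+1} - b_n$ are strictly positive this is always arrangeable, and the rest is the routine direct-limit bookkeeping above; note that we do not need $q$-tameness of the open filtration as a whole, only the per-level finiteness that total boundedness supplies.
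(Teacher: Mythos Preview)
Your argument is correct and follows the same route as the paper's: exhaust $\rips(X,a^-)$ by the closed complexes $\rips(X,b_n)$ with $b_n \nearrow a$, note that every homology class lifts to some finite stage (since cycles are finite chains), and show that each such image is finite-dimensional. The paper shortcuts your third step by simply citing q-tameness of $\Hgr(\Rips(X))$ (Proposition~\ref{prop:precompact-tame}) rather than re-running the finite-sample interleaving inline; one small slip in your version is that a $\delta$-sample yields only a $2\delta$-interleaving via Lemma~\ref{lemma:rips-interleaving}, not a $\delta$-interleaving, but since you choose $\delta$ freely small relative to $b_{n+1}-b_n$ this is harmless.
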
 

\begin{proof}
Any homology class in $\Hgr_k(\rips(X,a^{-}))$ is represented by a cycle, which by definition is a finite linear combination of simplices of diameter strictly less than~$a$ and therefore less than some $a - \tfrac{1}{n}$. It follows that the class lies in the image of $\Hgr_k(\rips(X,a-\frac{1}{n})) \to \Hgr_k(\rips(X,a^{-}))$.
Since $\Hgr_k(\Rips(X))$ is q-tame, by Proposition~\ref{prop:precompact-tame}, this image is finite dimensional.
Since $\Hgr_k(\rips(X,a^{-}))$ is the union of these finite dimensional images for $n \to +\infty$, it has a countable basis.
The full result follows by summing over~$k$.
\end{proof}

We cannot guarantee finite dimensionality. However, Proposition~\ref{prop:open-countable} suggests that we must proceed discretely if we are to find a counterexample.

\begin{proposition}
For any given $a>0$ there exists a totally bounded metric space $X$ such that $\Hgr_1(\rips(X,a^{-}))$ has infinite dimension. 
\end{proposition}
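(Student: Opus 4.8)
The plan is to build $X$ directly as an abstract metric space: two countable clusters whose Rips complexes are contractible, joined by infinitely many ``rungs'' that lie in no triangle, so that $\rips(X,a^{-})$ is homotopy equivalent to an infinite wedge of circles. In keeping with Proposition~\ref{prop:open-countable}, $X$ will be countable and discrete. Take $a=1$ (the general case follows by rescaling every distance by $a$). Fix reals $t_1>t_2>\cdots>0$ with $t_n\to0$ (say $t_n=2^{-n}$), let $X=\{p_n:n\ge1\}\cup\{q_n:n\ge1\}$ with all the $p_n,q_n$ distinct, and set
\[
d(p_i,p_j)=d(q_i,q_j)=t_{\min(i,j)}\ \ (i\ne j),\qquad d(p_n,q_n)=1-t_n,\qquad d(p_i,q_j)=1\ \ (i\ne j).
\]

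First I would check that $d$ is a metric. Positivity and symmetry are immediate; for the triangle inequality one may use the symmetry $p_n\leftrightarrow q_n$ to reduce to triples $\{p_i,p_j,p_k\}$ and $\{p_i,p_j,q_k\}$. The first kind is fine because $d$ restricted to $\{p_n\}$ is an ultrametric. For the second, a brief case analysis on whether $k\in\{i,j\}$ leaves a single nonobvious inequality, $1=d(p_i,q_j)\le d(p_i,p_j)+d(p_j,q_j)=t_{\min(i,j)}+(1-t_j)$, which holds since $t_{\min(i,j)}=\max(t_i,t_j)\ge t_j$. Total boundedness is then immediate: given $\e>0$ pick $N$ with $t_N<\e$; as $d(p_n,p_N)=t_N<\e$ for all $n\ge N$ and similarly for the $q$'s, the set $\{p_1,\dots,p_N,q_1,\dots,q_N\}$ is a finite $\e$-sample of $X$.

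Next I would identify $\rips(X,1^{-})$. A finite $\sigma\subseteq X$ is a simplex iff all its pairwise distances are $<1$: every finite subset of $\{p_n\}$ works (distances $\le t_1<1$), likewise for $\{q_n\}$, and since $d(p_i,q_j)<1$ forces $i=j$, any simplex meeting both sides is a single rung $e_n:=[p_n,q_n]$. So $\rips(X,1^{-})$ is the union of two full (infinite-dimensional) simplices on the disjoint vertex sets $\{p_n\}$ and $\{q_n\}$ --- each contractible --- together with the edges $e_n$, none of which is a face of a $2$-simplex. Collapsing the two contractible simplices produces a graph with two vertices and countably many parallel edges, i.e.\ a countable wedge of circles. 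For a self-contained computation, note that $\gamma_n:=[p_1,p_n]+e_n-[q_1,q_n]-e_1$ is a $1$-cycle for each $n\ge2$, and these are linearly independent in $\Hgr_1$: the edge $e_n$ appears among the $\gamma_n$ only in $\gamma_n$ itself and lies in no $2$-simplex, so any homological relation $\sum_{n\ge2}c_n\gamma_n=\partial\beta$ forces every $c_n=0$. Hence $\Hgr_1(\rips(X,1^{-}))$ is infinite-dimensional, and rescaling gives the claim for arbitrary $a>0$.

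The one substantive point --- and why no much simpler example seems possible --- is the interplay between the triangle inequality and total boundedness. Forcing $d(p_i,q_j)=1$ for $i\ne j$ while keeping $d(p_n,q_n)<1$ presses the rung lengths up against $1$, and the triangle inequality through the rungs then forces $d(p_i,p_j)\ge\max(t_i,t_j)$; this would destroy total boundedness if the rung lengths were bounded away from $1$, but is harmless precisely because $t_n\to0$, so the two clusters shrink. (That a counterexample cannot be purely $1$-dimensional is also worth noting: if $\rips(X,a^{-})$ had no $2$-simplex then every ball of radius $<a/2$ would contain at most two points of $X$, making $X$ finite by total boundedness --- which is exactly why the ``contractible hub'' mechanism is needed.)
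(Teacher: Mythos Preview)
Your proof is correct. The underlying idea is the same as the paper's --- two ``contractible clusters'' joined by countably many rungs $e_n=[p_n,q_n]$ that lie in no $2$-simplex, so the cycles $\gamma_n$ are linearly independent in $\Hgr_1$ --- but the construction itself is genuinely different. The paper realises the example concretely as a subset of the Euclidean plane: it sets $L_x=(f(x),x)$ and $R_x=(1-f(x),x)$ along a carefully chosen sequence $x=2^{-n}$ with $f(x)=\tfrac{1}{16}x^2$, and then verifies by a direct distance estimate that $|L_x-R_y|\ge 1$ whenever $x\ne y$ while $|L_x-R_x|<1$. You instead build an abstract metric space from scratch, putting an ultrametric $d(p_i,p_j)=t_{\min(i,j)}$ on each cluster and prescribing the cross distances explicitly. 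Your route trades the concrete Euclidean realisation for a cleaner verification of the triangle inequality (a short case analysis rather than an estimate with a tuned auxiliary function), and makes total boundedness and the simplex structure of $\rips(X,1^-)$ entirely transparent. The paper's version has the merit of showing that the phenomenon already occurs for a bounded subset of~$\Rr^2$; yours has the merit of being shorter and more self-contained. Your closing remarks on why the rung lengths must accumulate at~$1$, and why a $1$-dimensional Rips complex cannot work, are nice observations that the paper does not make explicit.
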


\begin{proof}
We may assume that $a=1$. 
We will construct a bounded subset $X \subset \Rr^2$ whose open \vr\ complex $\rips(X, 1^-)$ has infinite-dimensional 1-dimensional homology. 
We will construct it as the union of two infinite sets $L$ and $R$ such that $\rips(X, 1^-)$ contains the complete graphs on $L$ and $R$, and otherwise each vertex in $L$ shares an edge with precisely one vertex in~$R$, and vice versa. Following the same argument as the one used for the example before Proposition \ref{lemma:large-range-infinite-H1-rips} we will deduce that $\Hgr_1(\rips(X, 1^-))$ is infinite dimensonal. 

We define $X$ in terms of an auxiliary function $f(x) : [0,\infty) \to [0,\infty)$, which will be identified later. We suppose initially that $f$ is continuous, increasing, and positive except at $f(0)=0$.
Specifically, let
\[
L_x = (f(x), x),
\quad
R_x = (1-f(x), x)
\]
for $0 \leq x < f^{-1}(1/2)$. We define
\[
X = \{ L_x, R_x \mid x = \e_n,\, n \gg 0\}
\]
where $(\e_n)$ is a decreasing positive sequence with limit 0.

Clearly $|L_x - R_x| < 1$ for all $x > 0$. We must arrange that $|L_x - R_y| \geq 1$, for $x,y$ distinct elements of the sequence $(\e_n)$. Suppose $x > y$. Then:
\begin{align*}
|L_x - R_y|^2 
&=
  (1 - f(x) - f(y))^2 + (x-y)^2
\\
&\geq
  (1 - 2f(x))^2 + (x-y)^2
\\
&\geq
  1 - 4f(x) + (x-y)^2
\end{align*}
Suppose we have chosen our sequence so that $x > y$ implies $x \geq 2y$; for instance, by setting $(\e_n) = (2^{-n})$. Then $(x-y)^2 \geq \quarter x^2$. Now we choose $f(x) = \frac{1}{16}x^2$.  For $\sqrt{32} > x > y$ in the sequence $(2^{-n})$ we have
\begin{align*}
|L_x - R_y|^2 
\geq
  1 - 4f(x) + (x-y)^2
\geq
  1 - \quarter x^2 + \quarter x^2
= 1
\end{align*}
as required.

It follows that if we define
\[
X = L \cup R
=
\{ (2^{-2n-4}, 2^{-n}) \mid n \geq 1\} \cup \{ (1 - 2^{-2n-4}, 2^{-n}) \mid n \geq 1\}
\]
then $\rips(X, 1^-)$ contains the complete graphs on $L$ and $R$, and otherwise each vertex in $L$ shares an edge with precisely one vertex in~$R$, and vice versa. As a consequence, no triangle in $\rips(X, 1^-)$  contains any of these edges connecting $L$ to $R$. 
For each point $x_n = (2^{-2n-4}, 2^{-n}) \in L$, let $y_n \in R$ be the corresponding point in~$R$, so the edge $e_n = [x_n, y_n]$ is in $\rips(X, 1^-)$. Then, for $n \geq 2$, the cycles $\gamma_n = e_1 + [y_1,y_n]  - e_n + [x_n,x_1]$ and their linear combinations are not homologous to~$0$. So $\dim \Hgr_1(\rips(X, 1^-)) = \infty$.
\end{proof}

\subsubsection{The first homology group of a \vc\ filtration}
Individual \vc\ complexes are almost as badly behaved as individual \vr\ complexes.
It was shown in \cite{bsssw-htms-12} (Appendix B) that the homology groups $\Hgr_k(\cech(X; a))$ of a compact metric space~$X$ can be infinite dimensional for any $k \geq 2$.

However, the first homology is better behaved. The following result was originally obtained by Smale et al.\ \cite[Theorem 8]{bsssw-htms-12} using a different argument.

\begin{proposition}
Let $(X,d)$ be a totally bounded metric space, and let $a \geq 0$. Then, over any coefficient ring~$\Aa$ and any $a \in \Rr$, the 1-dimensional homology $\Hgr_1(\cech(X, a); \Aa)$ is finitely generated over~$\Aa$. In particular, over a field~$\kk$,
\[
\dim_\kk( \Hgr_1(\cech(X,a); \kk) ) < \infty.
\]
\end{proposition}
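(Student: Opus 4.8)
The plan is to compare $\cech(X,a)$ with a finite simplicial complex obtained from a finite sample via a ``snapping'' map, and then to control separately the image and the kernel of the induced map on $\Hgr_1$. All chains and homology below are taken with coefficients in $\Aa$.

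First I would fix $\delta$ with $0 < \delta \le a$ and a finite $\delta$-sample $F = \{f_1,\dots,f_N\}$ of $X$ (possible since $X$ is totally bounded), and set $U_i = B(f_i,\delta)$, so $X = \bigcup_i U_i$. Because $\delta \le a$, any finite subset of any intersection $U_{i_0}\cap\dots\cap U_{i_k}$ is a simplex of $\cech(X,a)$ --- witnessed by $f_{i_0}$ --- so each induced subcomplex $\cech(X,a)[U_{i_0}\cap\dots\cap U_{i_k}]$ is a full simplex, hence contractible or empty. Thus the ``clustered'' subcomplex $S := \bigcup_i \cech(X,a)[U_i]$ is, by the Nerve Lemma applied to its cover by the contractible subcomplexes $\cech(X,a)[U_i]$ with contractible intersections, homotopy equivalent to the finite nerve $\mathcal{N}(\{U_i\})$; in particular $\Hgr_*(S)$ is finitely generated.

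Next I would choose a rounding $\Phi : X \to F$ with $d(x,\Phi(x))\le\delta$ and $\Phi|_F = \mathrm{id}$, and let $\bar K := \Phi(\cech(X,a))$ be the finite complex on vertex set $F$ whose simplices are the images $\Phi(\sigma)$, $\sigma\in\cech(X,a)$. Then $\Phi : \cech(X,a)\to\bar K$ is a surjective simplicial map, so $\Phi_k$ is surjective on chains in each degree, giving a short exact sequence $0 \to \ker\Phi_\bullet \to C_\bullet(\cech(X,a)) \to C_\bullet(\bar K) \to 0$ and hence a long exact sequence
\[
\Hgr_1(\ker\Phi_\bullet) \longrightarrow \Hgr_1(\cech(X,a)) \xrightarrow{\ \Phi_*\ } \Hgr_1(\bar K) \longrightarrow \Hgr_0(\ker\Phi_\bullet) \longrightarrow \cdots
\]
Here $\Hgr_1(\bar K)$ is finitely generated because $\bar K$ is finite, so $\Hgr_1(\cech(X,a))$ is an extension of a submodule of $\Hgr_1(\bar K)$ by the image of $\Hgr_1(\ker\Phi_\bullet)$. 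It remains to bound that image: over a field all modules here are vector spaces and finite dimensionality passes to sub- and quotient spaces, and over a general ring one extracts an explicit finite generating set from the argument below, using that an extension of a finitely generated module by a finitely generated module is finitely generated.

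The heart of the argument --- and the step I expect to be the main obstacle --- is showing that $\Hgr_1(\ker\Phi_\bullet)$ has finitely generated image in $\Hgr_1(\cech(X,a))$. A $1$-cycle killed by $\Phi$ is supported on edges lying inside a single cluster $U_i$ (which belong to $S$) together with ``parallel'' long edges: pairs of edges between the same two clusters $U_i,U_j$ whose contributions cancel under $\Phi$. The idea is to subdivide each edge $[x,x']$ of such a cycle through the rounding $\Phi(w)$ of a chosen $a$-witness $w$ --- legitimate because $[x,w,x']$ and $[x,w,\Phi(w)]$ are simplices of $\cech(X,a)$, both witnessed by $w$ (using $d(w,\Phi(w))\le\delta\le a$) --- and then to use the contractibility of the clusters $\cech(X,a)[U_i]$ to push the surviving pieces into $S$, so that every class of $\ker\Phi_*$ lies in the image of $\Hgr_1(S)\to\Hgr_1(\cech(X,a))$ up to a combination of finitely many ``square'' cycles (indexed, crudely, by pairs of clusters). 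Since $\Hgr_1(S)$ is finitely generated, this bounds $\ker\Phi_*$ and finishes the proof. The reason this should succeed in degree $1$ but not in degree $2$ --- where, as in the Smale et al.\ examples, the analogous fibers of $\Phi$ over $2$-simplices carry surviving infinite-dimensional homology --- is that filling or rerouting a $1$-cycle only ever needs $2$-simplices, which are plentiful (any three points with a common $a$-witness span one), and this is exactly the slack needed to absorb the $\delta$ introduced by rounding, since the rerouting goes through witnesses, which are points of $X$ and hence $\delta$-close to the net.
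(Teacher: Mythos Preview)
Your machinery is sound --- the long exact sequence, the finiteness of $\bar K$, and the Nerve-Lemma description of $S$ are all correct --- but it reroutes the whole difficulty into the one step you yourself label ``the main obstacle'', and the sketch you give there does not go through.

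The concrete gap is this. Two edges $[x,x']$ and $[y,y']$ with $x,y\in U_i$ and $x',y'\in U_j$ (your ``parallel long edges'') need \emph{not} be homologous in $\cech(X,a)$ modulo chains in~$S$. To build such a homology you would need $2$-simplices such as $[x,y,x']$ in $\cech(X,a)$, i.e.\ a common $a$-center for $x,y,x'$; but knowing that $[x,x']$ has some $a$-center $w$ tells you nothing about $d(w,y)$, and the bound $d(x,y)\le 2\delta$ is no help. Your subdivision through $\Phi(w)$ is legitimate for a single edge but does not shrink anything: the new edges $[x,\Phi(w)]$ and $[\Phi(w),x']$ can have length up to $a+\delta$, so you have merely traded one cross-cluster edge for two. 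There is no mechanism in your outline for collapsing the a~priori infinite family of cross-cluster edges to finitely many homology classes, and hence no bound on $\ker\Phi_*$.

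The paper avoids this by sampling one level up: it takes a net of \emph{pairs}, not of points. First every $1$-cycle is made homologous to one whose edges have length $\le a$, by subdividing each edge through an $a$-center $w$ itself (not through $\Phi(w)$). Then one chooses a finite $a$-sample $E_a$ of the set $\{(x,y)\in X\times X:d(x,y)\le a\}$ in the $\ell^\infty$ product metric, possible because $X\times X$ is totally bounded. For a short edge $[x_i,x_{i+1}]$ approximated by $[x'_i,y'_i]\in E_a$, the triangles $[x'_i,x_i,y'_i]$, $[y'_i,x_i,x_{i+1}]$, $[y'_i,x_{i+1},x'_{i+1}]$ all lie in $\cech(X,a)$ because each has \emph{one of its own vertices} as $a$-center --- this uses only that the original edge, the approximating edge, and the approximation distances are all $\le a$. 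Summing these over the edges of an elementary loop slides it, via an explicit $2$-chain, to a loop supported on the finite vertex set of $E_a$. The argument is a concrete chain homotopy, so it works over any coefficient ring and sidesteps the Noetherian issue you flag.

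In short, the missing idea is that the correct finite approximation lives in the space of short edges, not in $X$; a vertex-rounding $\Phi$ discards exactly the information (where the $a$-centers sit) that is needed to build the filling $2$-simplices.
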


\begin{proof}
The proposition follows from a sequence of elementary remarks.
\medskip

1. Every $1$-cycle in $\cech(X,a)$ is homologous to a $1$-cycle whose edges have length at most $a$. 

\medskip
{\it Proof}\,
Any edge $[x,x']$ belonging to $\cech(X,a)$ has an $a$-centre; that is, a point~$y$ which satisfies $d(x,y) \leq a$ and $d(x',y) \leq a$. Since $d(y,y) =0 \leq a$, the point $y$ is also an $a$-centre for the triangle $[x,y,x']$ and the edges $[x,y]$, $[y,x']$.
It follows that any 1-cycle
\[
\gamma = \sum_i a_i [x_i, x'_i]
\]
can be replaced by a homologous 1-cycle
\[
\hat\gamma
= \gamma + \partial \sum_i a_i [x_i, y_i, x'_i]
= \sum_i a_i \left( [x_i, y_i] + [y_i,x'_i] \right)
\]
all of whose edges $[x_i, y_i]$, $[y_i, x'_i]$ have length at most~$a$.
{\scriptsize \qed}

\medskip
2. There exists a finite set $E_a$ of edges of length at most~$a$, with the following property: for any edge $[x,y]$ of length at most $a$, there exists an edge $[x',y']$ in~$E_a$ such that $d(x,x') \leq a$ and $d(y,y') \leq a$. 

\medskip
{\it Proof}\,
Since $X$ is totally bounded, so is $X \times X$ with the $\ell^\infty$ product metric
\[
d((x,y), (x',y')) = \max(d(x,x'), d(y,y')).
\]
Since $X \times X$ is totally bounded, so is its subspace
\[
[X \times X]_a
=
\left\{ (x,y) \in X \times X \mid d(x,y) \leq a \right\}.
\]
Let $(x'_1, y'_1), \dots, (x'_N, y'_N)$ be an $a$-sample for
$[X \times X]_a$. Then
\[
E_a = \left\{ 
[x'_1, y'_1], \dots, [x'_N, y'_N]
\right\}
\]
satisfies the required condition.
{\scriptsize \qed}

\medskip
3. Any 1-cycle can be written as a finite linear combination of cycles of the form
\begin{equation}
[x_1,x_2] + [x_2, x_3] + \dots + [x_{k-1}, x_k] + [x_k, x_1]
\end{equation}
($k$ may vary).

\medskip
{\it Proof}\,
This is standard, but we give the proof explicitly.
Certainly any 1-cycle $\gamma$ can be written as a finite linear combination of cycles (as above) and paths of the form
\[
[x_1,x_2] + [x_2, x_3] + \dots + [x_{k-1}, x_k]
+ [x_k, x_{k+1}],
\qquad
x_1 \not= x_{k+1},
\]
(the trivial solution is to use paths of length~1 and no
cycles). Consider the `free' vertices in such a decomposition
for~$\gamma$: that is, vertices that occur as endpoints of the paths
in the decomposition. We can eliminate the free vertices one by one as
follows. Pick a free vertex and enumerate the paths which terminate
there: $P_1, P_2, \dots, P_m$. Since $\partial\gamma = 0$,  we must have
$m \geq 2$. We can decrease $m$ strictly by
concatenating $P_m$ with the appropriate multiple of $P_{m-1}$ or its
reverse. This creates a new, longer path (or cycle, if the other endpoints
coincide) in place of $P_m$, and rescales or annihilates
$P_{m-1}$. Eventually $m=0$ and the free vertex is eliminated.
Finally, when there are no free vertices the decomposition involves only cycles, and we are done.
{\scriptsize \qed}

\medskip
4.
Consider a cycle of the form
\[
\gamma = 
[x_1,x_2] + [x_2, x_3] + \dots + [x_{k-1}, x_k]
+ [x_k, x_1]
\]
whose edges have length at most~$a$. Then $\gamma$ is homologous in $\cech(X, a)$ to a cycle whose edges belong to $E_a$.

\medskip
{\it Proof}\,
Approximate each edge $[x_i, x_{i+1}]$ by an edge $[x'_i, y'_i] \in E_a$ according to remark~2 (interpreting $x_{k+1}$ as $x_1$, cyclically). We claim that
\[
\hat\gamma = 
[x'_1,y'_1] + [y'_1, x'_2] \;+\;
[x'_2,y'_2] + [y'_2, x'_3] \;+
\dots +\;
[x'_k,y'_k] + [y'_{k}, x'_1]
\]
is homologous to $\gamma$ in $\cech(X, a)$. Indeed
\[
\gamma - \hat\gamma
=
\partial
\sum_{i=1}^{k}
\Big(
[x'_i, x_i, y'_i]
+
[y'_i, x_i, x_{i+1}]
+
[y'_i, x_{i+1}, x'_{i+1}]
\Big)
\]
(see figure~\ref{fig:approx_cycle}).
\begin{figure}
\centering{
\includegraphics[scale=0.47]{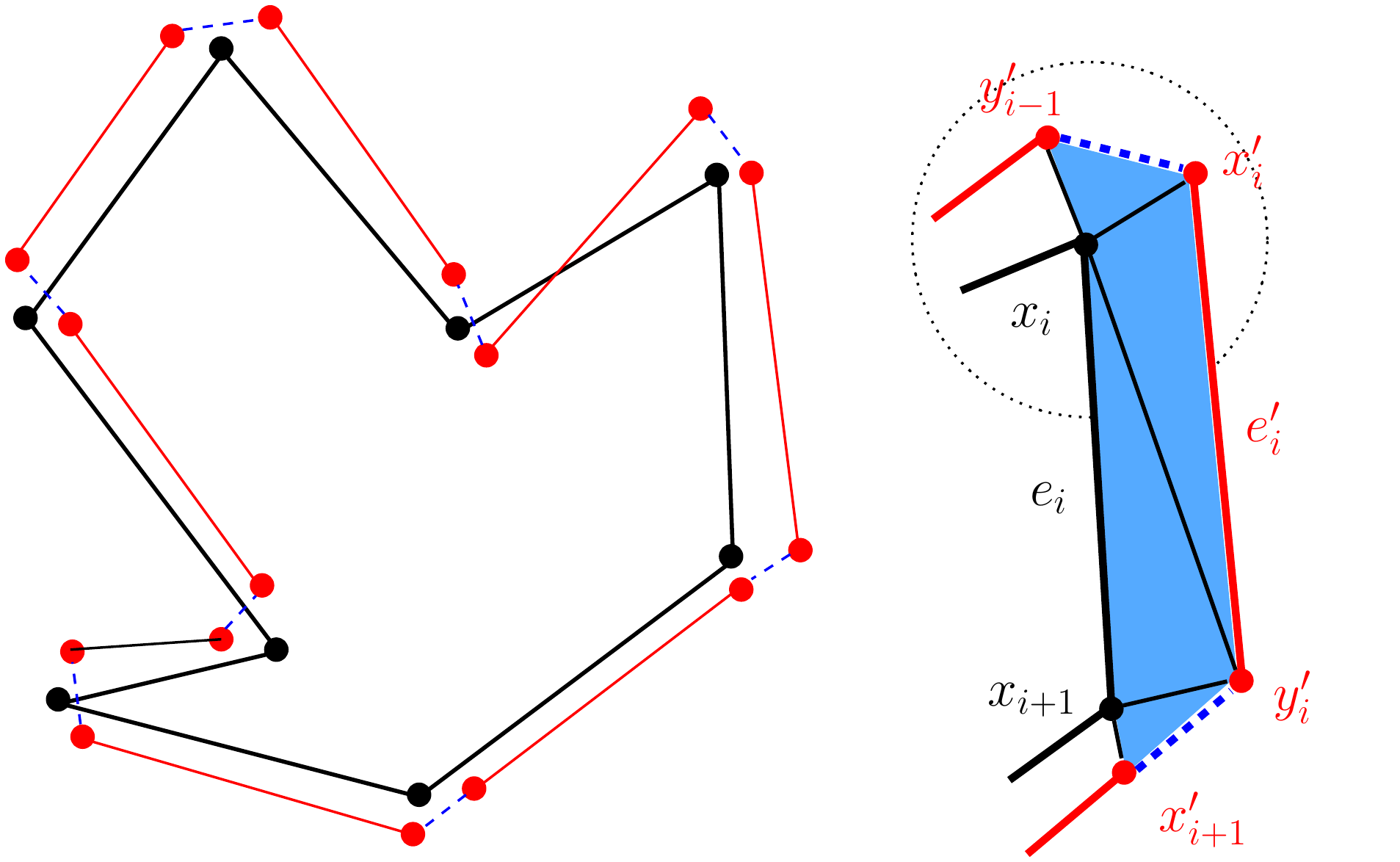}}
\caption{}
\label{fig:approx_cycle}
\end{figure}
To verify that the right-hand side of the equation belongs to $\cech(X, a)$, note that the triangles $[x'_i, x_i, y'_i]$, $[y'_i, x_i, x_{i+1}]$ and $[y'_i, x_{i+1}, x'_{i+1}]$ have $a$-centres $x_i'$, $x_{i+1}$ and $x_{i+1}$ respectively.
{\scriptsize \qed}

\medskip
Combining remarks 1, 3 and~4, we see that in $\cech(X, a)$ any $1$-cycle $\gamma$ is homologous to a 1-cycle involving only edges in the finite set $E_a$. It follows that the homology $\Hgr_1(\cech(X, a); \Aa)$ is finitely generated.
\end{proof}

\section{Special classes of metric spaces}
\label{sec:special-classes}

Up this point we have been considering metric spaces in full generality, occasionally assuming total boundedness. In the last part of this paper, we show that within certain classes of metric space there are constraints on the persistence diagrams of their Rips complexes. The theme is `no new cycles' beyond a certain Rips diameter. For path metric spaces, there are no new 1-cycles once the diameter is positive (Theorem~\ref{thm:H1-Rips-length-space}). For $\delta$-hyperbolic spaces, there are no new 2-cycles after the diameter exceeds~$2\delta$ (Theorem~\ref{thm:H2-Rips-delta-hyperbolic}).
It seems to us that these two results should be the beginning of a much richer story.

\subsection{The persistence diagram of $\Hgr_1(\Rips)$ for a path metric space}

Recall that a metric space $(X,d_X)$ is a path metric space if the distance between each pair of points is equal to the infimum of the lengths of the curves joining these two points.\footnote{See \cite{g-msrnr-07} chap.1, def 1.2 for a definition of the length of a curve in a metric space.}
Every path metric space $(X,d_X)$ satisfies the following property (see \cite{g-msrnr-07}, Theorem 1.8): for any $x, x' \in X$ and any $\e >0$ there exists a point $z \in X$ such that 
\[
\sup ( d_X(x,z) , d_X(x'z) ) \leq \half d_X(x,x') + \e.
\]

\begin{lemma} \label{lemma:H1-Rips-path-metric-space}
Let $(X,d_X)$ be a path metric space and let $q > 0$. Then the map
\[
\Hgr_1(\rips(X,\twothirds q)) \to \Hgr_1(\rips(X,q))
\]
is surjective.
\end{lemma}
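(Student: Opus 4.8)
The plan is to show that every $1$-cycle in $\rips(X,q)$ is homologous, inside $\rips(X,q)$, to a $1$-cycle whose edges all have length at most $\twothirds q$, so that it lies in the image of the map from $\Hgr_1(\rips(X,\twothirds q))$. Since any homology class in $\Hgr_1(\rips(X,q))$ is represented by a finite $1$-cycle $\gamma = \sum_i [x_i,x_i']$, it suffices to treat a single edge $[x,x']$ of length $d_X(x,x') \leq q$ and explain how to replace it by a path of shorter edges without changing the homology class, then iterate.

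First I would use the path-metric midpoint property quoted just before the lemma: given an edge $[x,x']$ with $d_X(x,x') = \ell \leq q$, choose (for a small $\e > 0$ to be fixed at the end) a point $z \in X$ with $\max(d_X(x,z), d_X(x',z)) \leq \half \ell + \e$. Then $[x,z]$, $[z,x']$ and $[x,x']$ all have length at most $\max(\ell, \half\ell+\e) \leq q$ provided $\e$ is small, so the triangle $[x,z,x']$ lies in $\rips(X,q)$ and $[x,x']$ is homologous in $\rips(X,q)$ to $[x,z] + [z,x']$, whose two edges have length at most $\half\ell + \e$. Replacing every edge of $\gamma$ this way produces a homologous cycle all of whose edges have length at most $\half q + \e$, which is $\leq \twothirds q$ once $\e \leq \tfrac{1}{6}q$. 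Since the subdivided cycle uses only simplices of diameter at most $\twothirds q$, it is a cycle of $\rips(X,\twothirds q)$ mapping to the original class, establishing surjectivity.

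The one point requiring a little care — and the main obstacle — is that a single midpoint subdivision only reduces edge lengths to roughly $\half\ell$, and we must check that one round of subdivision already suffices to get below $\twothirds q$; this works because every edge of $\gamma$ starts with length at most $q$, and $\half q + \e \leq \twothirds q$ for $\e \leq \tfrac{1}{6}q$, so no iteration is needed. I would also note that the subdivision is performed simultaneously and independently on each edge of the finite cycle $\gamma$, so the bounding $2$-chain is the finite sum $\partial\sum_i [x_i, z_i, x_i']$ and everything stays within $\rips(X,q)$. The argument never uses finiteness or total boundedness of $X$, only the defining approximate-midpoint property of path metric spaces.
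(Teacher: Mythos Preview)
Your argument is correct and essentially identical to the paper's own proof: both invoke the approximate-midpoint property of path metric spaces to replace each edge $[x,x']$ by the pair $[x,z]+[z,x']$ via the $2$-simplex $[x,z,x']\in\rips(X,q)$, choosing $\e\leq\tfrac{1}{6}q$ so that the new edges have length at most $\half q+\e\leq\twothirds q$. The paper writes the cycle with explicit coefficients $\lambda_i$, but otherwise the reasoning matches yours step for step.
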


\begin{proof}
Any $1$-cycle $\gamma$ in $\rips(X,q)$ can be written as a finite linear combination $\lambda_1 e_1 + \dots + \lambda_n e_n$ of edges $e_i = [x_i, y_i]$ of length at most~$q$. For any~$i$, there exists $m_i \in X$ such that:
\begin{alignat*}{2}
d_X(x_i,m_i) &\leq \half d_X(x_i,y_i) + \e &&\leq \twothirds q
\\
d_X(y_i,m_i) &\leq \half d_X(x_i,y_i) + \e &&\leq \twothirds q
\end{alignat*}
As a consequence the triangle $[x_i, m_i, y_i]$ is contained in $\rips(X,q)$ so $\gamma$ is homologous in $\rips(X,q)$ to
\[
\gamma + \partial \left[  \sum_{i=1}^n \lambda_i [x_i,m_i,y_i] \right]
= \sum_{i=1}^n \lambda_i [x_i,m_i] + \lambda_i [m_i,y_i]
\]
which is a $1$-cycle whose edges belong to $\rips(X, \twothirds q)$.
\end{proof}

\begin{corollary}
\label{cor:H1-Rips-length-space}
Let $(X,d_X)$ be a path metric space and $0 < p < q$. Then the map $\Hgr_1(\rips(X,p)) \to \Hgr_1(\rips(X, q))$ is surjective.
\end{corollary}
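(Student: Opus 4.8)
The plan is to bootstrap Corollary~\ref{cor:H1-Rips-length-space} out of Lemma~\ref{lemma:H1-Rips-path-metric-space} by iterating that lemma down to an arbitrarily small scale and then exploiting functoriality.

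First I would note that Lemma~\ref{lemma:H1-Rips-path-metric-space} holds for \emph{every} positive value of its parameter: for each $r>0$ the inclusion $\rips(X,\twothirds r)\hookrightarrow\rips(X,r)$ induces a surjection $\Hgr_1(\rips(X,\twothirds r))\to\Hgr_1(\rips(X,r))$. Applying this with $r=q,\ \twothirds q,\ (\twothirds)^2 q,\dots$ and using the fact that the homology of a filtered complex is functorial (Example~\ref{example:filtered-complex-persistence}), the composite of the inclusion-induced maps along the chain of scales $(\twothirds)^n q\le(\twothirds)^{n-1}q\le\cdots\le q$ coincides with the single inclusion-induced map $\Hgr_1(\rips(X,(\twothirds)^n q))\to\Hgr_1(\rips(X,q))$. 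As a composite of surjections it is itself surjective, for every integer $n\ge 0$.

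To conclude, since $(\twothirds)^n q\to 0$ and $p>0$, I would choose $n$ with $(\twothirds)^n q\le p$; together with $p<q$ this gives the ordering $(\twothirds)^n q\le p<q$, so the map above factors through $\Hgr_1(\rips(X,p))$:
\[
\Hgr_1(\rips(X,(\twothirds)^n q))\longrightarrow\Hgr_1(\rips(X,p))\longrightarrow\Hgr_1(\rips(X,q)).
\]
Since the composite is surjective, so is the second arrow, which is the assertion of the corollary.

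I do not expect any genuine obstacle; the argument is a formal consequence of the lemma. The one hypothesis that is actually used is $p>0$: it is precisely what allows the geometric sequence $(\twothirds)^n q$ to fall below $p$. (For $p\le 0$ the complex $\rips(X,p)$ is just the vertex set, so $\Hgr_1(\rips(X,p))=0$ and the statement fails whenever $\Hgr_1(\rips(X,q))\ne 0$.)
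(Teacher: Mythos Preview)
Your argument is correct and is exactly the approach taken in the paper: iterate Lemma~\ref{lemma:H1-Rips-path-metric-space} until $(\twothirds)^n q \leq p$, then factor through $\Hgr_1(\rips(X,p))$. The paper states this in one line; you have simply spelled out the functoriality and factorization details.
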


\begin{proof}
Iterate the surjectivity of $\Hgr_1(\rips(X, \frac{2}{3}q)) \to \Hgr_1(\rips(X, q))$. Eventually $\left(\frac{2}{3}\right)^n q \leq p$.
\end{proof}

\begin{theorem}
\label{thm:H1-Rips-length-space}
Let $(X,d_X)$ be a path metric space. Then the persistence diagram of $\Hgr_1(\Rips(X))$ is contained in the vertical line $\{0 \} \times [0,+\infty)$.
\end{theorem}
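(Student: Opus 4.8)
The plan is to deduce the theorem directly from Corollary~\ref{cor:H1-Rips-length-space} together with the general theory of persistence diagrams for q-tame modules recalled in Section~\ref{sec:persmod}. First I would note that a point $(p,q)$ with $p<q$ in the persistence diagram of a module $\Vv$ records a feature that is born at $p$ and dies at $q$; in particular its presence forces the internal map $v_p^{q'}$ to fail to be surjective onto the image of $v_{p'}^{q'}$ for appropriate $p'<p$ — more precisely, the multiplicity of $(p,q)$ in $\dgm(\Vv)$ is detected by the quantity $\rank(v_{p'}^{q'}) - \rank(v_{p}^{q'}) - \rank(v_{p'}^{q}) + \rank(v_{p}^{q})$ for $p'<p\le q<q'$ (the standard measure-theoretic formula for persistence diagrams of q-tame modules, as developed in \cite{cdsgo-sspm-12}). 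So to show the diagram of $\Hgr_1(\Rips(X))$ lies on the vertical line $\{0\}\times[0,\infty)$, it suffices to show that for every $0<p\le q$ the map $v_p^q : \Hgr_1(\rips(X,p)) \to \Hgr_1(\rips(X,q))$ is surjective: surjectivity of all these maps means no class can be ``born'' at any positive parameter, so every off-diagonal point must have first coordinate $0$.

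Next I would invoke Corollary~\ref{cor:H1-Rips-length-space}, which gives exactly this surjectivity for $0<p<q$ (and trivially for $p=q$). Combined with q-tameness of $\Hgr_1(\Rips(X))$ — which holds whenever $X$ is totally bounded by Proposition~\ref{prop:precompact-tame}, but for the statement as phrased we should either assume $X$ totally bounded or instead argue directly from surjectivity without passing through the measure-theoretic machinery — the four-term rank formula above vanishes whenever $p>0$, because the map $v_p^{q'}$ is surjective and hence $\rank(v_{p'}^{q'}) = \rank(v_p^{q'})$ and $\rank(v_{p'}^q) = \rank(v_p^q)$ whenever $0<p'<p$. Therefore the diagram assigns zero multiplicity to every region bounded away from the vertical axis, so $\dgm(\Hgr_1(\Rips(X))) \subseteq \{0\}\times[0,+\infty)$.

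The main obstacle, and the point where care is needed, is the status of q-tameness: the theorem as stated does not assume $X$ totally bounded, yet persistence diagrams are only defined for q-tame (or otherwise tame) modules. I would resolve this by observing that the surjectivity conclusion of Corollary~\ref{cor:H1-Rips-length-space} is itself the essential content, and phrase the proof so that it reads: ``by Corollary~\ref{cor:H1-Rips-length-space} all the maps $v_p^q$ with $0<p\le q$ are surjective, hence no point of the persistence diagram can have positive birth coordinate; thus the diagram lies in $\{0\}\times[0,+\infty)$,'' implicitly using that the diagram is defined (which in the intended reading follows because the interesting case is $X$ compact, so totally bounded, so Proposition~\ref{prop:precompact-tame} applies). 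The genuinely substantive work — producing the midpoints $m_i$ using the path-metric approximation property and checking the triangles $[x_i,m_i,y_i]$ lie in the Rips complex — has already been carried out in Lemma~\ref{lemma:H1-Rips-path-metric-space} and Corollary~\ref{cor:H1-Rips-length-space}, so this final step is purely a translation from surjectivity of structure maps to a constraint on the persistence diagram.
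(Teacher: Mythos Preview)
Your approach is essentially the same as the paper's: both reduce the theorem to Corollary~\ref{cor:H1-Rips-length-space} and then translate surjectivity of the structure maps into a constraint on the persistence diagram. The paper phrases this translation in the persistence-measure language of~\cite{cdsgo-sspm-12}, showing $\mu(R)=0$ for every rectangle $R$ not meeting the line (splitting into the trivial case $q<0$ and the surjectivity case $p>0$), while you use the equivalent four-term rank formula.

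The one point where the paper does something you do not: it handles the absence of a total-boundedness hypothesis cleanly. You propose to fall back on assuming $X$ compact so that Proposition~\ref{prop:precompact-tame} gives q-tameness. The paper instead observes (in the Remark following the proof) that the persistence diagram is defined wherever the persistence measure is finite; since the argument shows the measure is \emph{zero} on every rectangle away from the vertical line, the diagram is defined and empty there regardless of whether the module is globally q-tame. This is a genuine strengthening over your version and removes the need for any tameness assumption. Also, a small slip in your rank argument: the equality $\rank(v_{p'}^{q'}) = \rank(v_p^{q'})$ follows from surjectivity of $v_{p'}^{p}$ (which you have), not from surjectivity of $v_p^{q'}$ as you wrote.
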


Essentially, this is equivalent to Corollary~\ref{cor:H1-Rips-length-space} in asserting that there are no new 1-cycles once the Rips diameter is strictly positive.
The formal deduction depends on how the persistence diagram is constructed.
Since we are using the measure-theoretic construction of~\cite{cdsgo-sspm-12}, we must argue in terms of the notation and concepts of that paper.
Readers unfamiliar with~\cite{cdsgo-sspm-12} are encouraged to skip the proof and arrive at their own understanding of the relationship between Corollary~\ref{cor:H1-Rips-length-space} and Theorem~\ref{thm:H1-Rips-length-space}.

\begin{proof}
It is enough to show that $\mu(R) = 0$ for any rectangle which does not meet the line. Write $R = [p,q] \times [r,s]$, where $-\infty \leq p < q < r < s \leq +\infty$.
Either $q < 0$, in which case
\[
\mu(R)
=
\langle \qoff{p}\qem\qon{q}\qem\qon{r}\qem\qoff{s} \rangle
\leq \langle \qon{q} \rangle
= \dim \Hgr_1(\rips(X, q)) = 0
\]
because there are no edges when $q < 0$. Or else $0 < p$, in which case
\[
\mu(R)
=
\langle \qoff{p}\qem\qon{q}\qem\qon{r}\qem\qoff{s} \rangle
\leq
\langle \qoff{p}\qem\qon{q} \rangle
= 0
\]
because $\Hgr_1(\rips(X,p)) \to \Hgr_1(\rips(X, q))$ is surjective.
\end{proof}

\begin{remark}
We have not required $X$ to be totally bounded in this theorem.
This would seem necessary for invoking the persistence diagram. In fact, it is shown in~\cite{cdsgo-sspm-12} that the persistence diagram is defined wherever the persistence measure takes finite values. Above we see that the measure is zero away from the vertical line, so the diagram is defined, and empty, away from that line. A similar remark applies to Theorem~\ref{thm:H2-Rips-delta-hyperbolic}.
\end{remark}

Theorem~\ref{thm:H1-Rips-length-space} relies on the fact that any $1$-dimensional simplex in a path metric space can be `subdivided' into a sum of smaller simplices; in other words is homologous to a sum of simplices of strictly smaller diameter. Without further assumptions, this property does not hold for higher-dimensional simplices.
For example if $(X, d_X)$ is a circle of length $1$, then any triple of points $x,y,z$ such that $d_X(x,y) = d_X(y,z) = d_X(z,x) = \frac{1}{3}$ spans a triangle in $\rips(X,\frac{1}{3})$ that is not homologous to any finite sum of triangles of diameter strictly less than $\frac{1}{3}$.

In the next section, we show that there is an analogous result in~$\Hgr_2$ for metric spaces which are $\delta$-hyperbolic.

\subsection{The persistence diagram of $\Hgr_2(\Rips)$ for a $\delta$-hyperbolic space}

\newcommand{\geo}[2]{{{[\![}#1,#2{]\!]}}}

Let $X$ be a geodesic space, that is a metric space where any $x,y \in X$ are connected by geodesic of minimal length $d = d_X(x,y)$. If we choose a minimising geodesic $\geo{x}{y} \subset X$, then $\geo{x}{y}_t$ denotes the point on it which lies at distance~$t$ from~$x$. Taking $\geo{y}{x}$ to be the reverse geodesic, we clearly have $\geo{x}{y}_t = \geo{y}{x}_{d-t}$.

A geodesic space $X$ is said to be {\bf $\delta$-hyperbolic} (see~\cite{gh-90} chap.2) if the sides of every triangle run very close to each other in the following sense. Given $x,y,z \in X$, let $[y,z]$, $[z,x]$, $[x,y]$ be minimising geodesics with lengths $a,b,c$ respectively. The triangle inequality implies that there are non-negative numbers $\alpha, \beta, \gamma$ such that
\[
a = \beta + \gamma,
\quad
b = \gamma + \alpha,
\quad
c = \alpha + \beta,
\]
namely
\[
\alpha = \half(b+c-a),
\quad
\beta = \half (c+a-b),
\quad
\gamma = \half (a+b-c).
\]
The $\delta$-hyperbolicity condition for the triangle is:
\begin{alignat*}{2}
d_X([x,y]_t, [x,z]_t) &\leq \delta
\qquad &&\text{for $0 \leq t \leq \alpha$}
\\
d_X([y,z]_t, [y,x]_t) &\leq \delta
\qquad &&\text{for $0 \leq t \leq \beta$}
\\
d_X([z,x]_t, [z,y]_t) &\leq \delta
\qquad &&\text{for $0 \leq t \leq \gamma$}
\end{alignat*}
If this holds for all triangles $[x,y,z]$, then $X$ is $\delta$-hyperbolic.
\begin{figure}
\hfill
\includegraphics[scale=0.53]{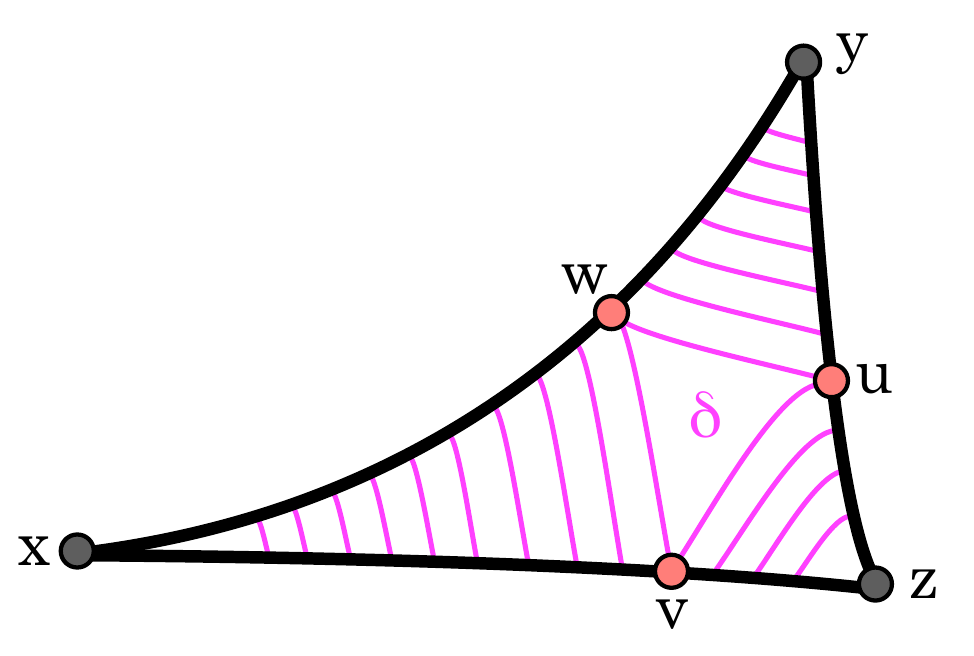}
\hfill
\includegraphics[scale=0.53]{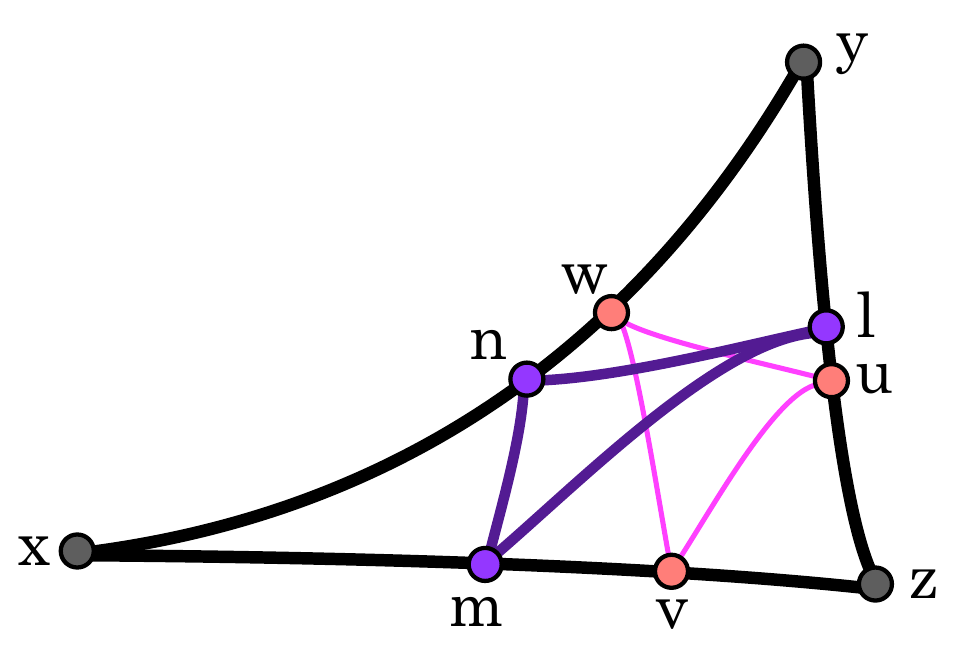}
\hfill{}
\caption{
(left) The triangle $xyz$ is $\delta$-hyperbolic iff the indicated secants have length at most~$\delta$.
(right) The triangle is split into four, with new vertices at the midpoints $\ell, m, n$ of the sides.}
\label{fig:triangle}
\end{figure}
See figure~\ref{fig:triangle} (left). The points
\begin{align*}
u &= \geo{y}{z}_\beta = \geo{z}{y}_\gamma
&
v &= \geo{z}{x}_\gamma = \geo{x}{z}_\alpha
&
w &= \geo{x}{y}_\alpha = \geo{y}{x}_\beta
\end{align*}
have special importance. If $X$ is a tree, then $u,v,w$ coincide. In euclidean or hyperbolic space $u,v,w$ are the points of tangency of the incircle of $xyz$.

\begin{lemma} \label{lemma:H2-Rips-delta-hyperbolic}
Let $(X,d_X)$ be a $\delta$-hyperbolic geodesic space and let $q > 2\delta$. Then the map
$
\Hgr_2(\rips(X,\half q + \delta)) \to \Hgr_2(\rips(Z,q))
$
is surjective.
\end{lemma}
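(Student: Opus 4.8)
The plan is to argue at the chain level: I would show that every $2$-cycle $\gamma$ in $\rips(X,q)$ is homologous, \emph{inside} $\rips(X,q)$, to a $2$-cycle all of whose simplices have diameter at most $\half q+\delta$; surjectivity of the map in the statement then follows at once. Since $\gamma$ is a finite linear combination of $2$-simplices, it suffices to treat a single simplex $[x,y,z]\in\rips(X,q)$ and replace it by a subdivision into small triangles governed by a rule that is determined by the edges alone — so that shared edges are subdivided identically in the two triangles meeting along them and the subdivided chain is again a cycle. Write $a=d_X(y,z)$, $b=d_X(z,x)$, $c=d_X(x,y)$, and keep $\alpha,\beta,\gamma$ and the feet $u,v,w$ as in the discussion preceding the lemma.

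First I would subdivide geometrically: using geodesicity, put the metric midpoints $\ell=\geo{y}{z}_{a/2}$, $m=\geo{z}{x}_{b/2}$, $n=\geo{x}{y}_{c/2}$ on the three sides, and replace $[x,y,z]$ by the four oriented triangles $[x,n,m]$, $[n,y,\ell]$, $[m,\ell,z]$, $[n,\ell,m]$. The two short sides of each corner triangle have length $c/2$, $b/2$ or $a/2$, hence are $\le\half q$; the only distances needing work are $d_X(m,n)$, $d_X(n,\ell)$, $d_X(\ell,m)$, which are simultaneously the third sides of the corner triangles and the three sides of the inner triangle $[n,\ell,m]$. This is where $\delta$-hyperbolicity enters. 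For the inner side opposite a \emph{shorter} side of $[x,y,z]$ one compares the two midpoints directly through their common vertex: if, say, $b\le c$ then $b/2\le\alpha$, so thinness of the triangle at $x$ gives $d_X(\geo{x}{z}_{b/2},\geo{x}{y}_{b/2})\le\delta$, whence $d_X(m,n)\le\delta+\tfrac12|c-b|\le\delta+\tfrac12a\le\half q+\delta$ (using $|c-b|\le a$). For the inner side opposite the \emph{longest} side of $[x,y,z]$ the direct comparison fails, and one instead routes through the feet $u,v,w$, which satisfy $d_X(u,v),d_X(v,w),d_X(w,u)\le\delta$: for instance $d_X(\ell,m)\le|a/2-\gamma|+\delta+|b/2-\gamma|=\delta+\bigl(c-\tfrac12(a+b)\bigr)\le\half q+\delta$, using $a+b\ge c$ (so the parenthesis is $\le\tfrac12 c\le\half q$). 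Thus all four sub-triangles have diameter at most $\half q+\delta$.

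The remaining point is that the homology is realized inside $\rips(X,q)$: the $3$-chain witnessing $[x,y,z]\sim[x,n,m]+[n,y,\ell]+[m,\ell,z]+[n,\ell,m]$ can be taken with vertices among $\{x,y,z,\ell,m,n\}$, and one has to check that each of its tetrahedra has all pairwise distances $\le q$ — the nontrivial bounds being of the form $d_X(n,z)\le\delta+\max(a,b)-c/2$, which is $\le q$ as soon as $c\ge 2\delta$. Accordingly I would only subdivide edges of length $>2\delta$ (still an edge-determined rule, so the subdivided chain stays a cycle): a triangle all of whose sides are $\le 2\delta$ is left alone, since $2\delta\le\half q+\delta$, and the others receive the corresponding partial midpoint subdivision. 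I expect this bookkeeping — keeping every piece, and every connecting tetrahedron, inside $\rips(X,q)$ while still pinning the sub-triangle diameters at the sharp value $\half q+\delta$, with the case split organized by how many sides of $[x,y,z]$ exceed $2\delta$ — to be the main obstacle, since the thinness inequalities must be applied at exactly the right basepoints and $q>2\delta$ is precisely what makes the margins close; the rest is routine, summing the single-simplex statement over the finitely many simplices of $\gamma$.
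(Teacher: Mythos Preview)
Your approach is essentially the paper's: subdivide each triangle at the edge midpoints $\ell,m,n$, bound the inner edges by routing through the feet $u,v,w$ (your estimates for $d_X(\ell,m)$, $d_X(m,n)$, $d_X(n,\ell)$ are exactly the ones in the paper), and then fill in with tetrahedra on $\{x,y,z,\ell,m,n\}$ to realise the homology inside $\rips(X,q)$.

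The one place where you diverge is the case split on edges of length $>2\delta$, and this is unnecessary. The $3$-chain can be chosen to use only \emph{one} median, namely $zn$ where $z$ is the vertex opposite the longest side~$c$: take
\[
H[x,y,z]=[x,y,z,n]+[x,z,m,n]-[y,z,\ell,n]+[z,\ell,m,n].
\]
A direct check shows $\partial H[x,y,z]$ equals the four small triangles minus $[x,y,z]$, plus the edge-determined terms $[x,y,n]-[x,z,m]+[y,z,\ell]$; these last cancel when summed over a cycle. The only new edge appearing in these tetrahedra beyond the half-edges and the inner edges is $zn$, and with $c$ longest your own bound gives
\[
d_X(z,n)\le \delta+\max(a,b)-\tfrac{c}{2}\le \delta+c-\tfrac{c}{2}=\delta+\tfrac{c}{2}\le \delta+\tfrac{q}{2}<q,
\]
the last inequality being exactly the hypothesis $q>2\delta$. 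So no partial subdivision or case analysis is needed: subdivide every triangle uniformly, and for each one pick the filling tetrahedra through the median to the vertex opposite its longest side. The ``main obstacle'' you anticipate disappears once you notice that the choice of which median to use is yours to make per triangle, and the right choice always keeps the filling inside $\rips(X,q)$.
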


\begin{proof}
Any class $\sigma \in \Hgr_2(\rips(X,q))$ is represented by a 2-cycle which is a linear combination of triangles in $\rips(X,q)$. We must break these triangles into smaller triangles.

We begin by selecting a geodesic $\geo{x}{y}$ for each pair $x,y$ which occurs as an edge of a triangle in~$\sigma$.

Let $T = [x,y,z]$ be a triangle of $\sigma$, with side-lengths $a,b,c$ and $\alpha, \beta, \gamma$ defined as before.
We will break up the triangle using the midpoints
\begin{align*}
\ell &= \geo{y}{z}_{a/2}
&
m &= \geo{z}{x}_{b/2}
&
n &= \geo{x}{y}_{c/2}
\end{align*}
of the sides. To help estimate the new edges we consider the `incircle' points $u,v,w$ defined above. See figure~\ref{fig:triangle} (right).

Along the three geodesic sides of the triangle, one easily calculates:
\begin{align*}
d_X(u, \ell) & = \half | b - c |
&
d_X(v, m) & = \half | c - a |
&
d_X(w, n) & = \half | a - b |
\end{align*}
We estimate the distances between $\ell, m, n$. Since $[x,y,z] \in \rips(X,q)$ we have $\max(a,b,c) \leq q$. 
Moreover, we suppose that $a \leq b \leq c$.
Then:
\begin{align*}
d_X(\ell,m)
&\leq d_X(\ell,u) + d_X(u,v) + d_X(v,m)&&
\\
&\leq \half(c-b) + \delta + \half (c-a)
\\
&\leq \half c - \gamma + \delta
\\
&\leq \half q + \delta
\end{align*}
and:
\begin{align*}
d_X(\ell,n)
&\leq d_X(\ell,u) + d_X(u,w) + d_X(w,n)
\\
&\leq \half(c-b) + \delta + \half (b-a)
\\
&\leq \half c - \half a + \delta
\\
&\leq \half q + \delta
\end{align*}
For the third edge we introduce $m' = \geo{x}{y}_{b/2}$. Since $\half b \leq \alpha$ we can invoke the $\delta$-hyperbolicity condition for $m, m'$ to get:
\begin{align*}
d_X(m,n)
&\leq
d_X(m,m') + d_X(m',n)
\\
&\leq
\delta + \half(c-b)
\\
&\leq
\half q + \delta
\end{align*}
From this, we see that if we replace each triangle $[x,y,z]$ of~$\sigma$ with the corresponding sum
\[
-[x,m,n] + [y,\ell, n] - [z,\ell,m] + [\ell, m, n]
\]
we get a 2-cycle $\hat\sigma$ whose triangles belong to $\rips(X, \half q + \delta)$.

We need to do one more thing, which is to show that $\sigma, \hat\sigma$ are homologous through a 3-cycle whose tetrahedra belong to $\rips(X, q)$. We require an additional edge to do this, the median~$zn$:
\begin{align*}
d_X(z,n)
&\leq d_X(z,u) + d_X(u,w) + d_X(w,n)
\\
&\leq \half(b+a-c) + \delta + \half(b-a)
\\
&\leq \half c + \delta
\\
&\leq q
\end{align*}
Now define
\[
H[x,y,z] =
[x,y,z,n] + [x,z,m,n] - [y,z,\ell,n] + [z, \ell, m, n]
\]
for each triangle $[x,y,z]$, and extend linearly over all triangles in~$\sigma$. One can check that $\hat \sigma = \sigma + \partial H \sigma$ when $\sigma$ is a cycle.
 All edges of the tetrahedra in~$H\sigma$ have length at most~$q$, so $[\sigma] = [\hat \sigma]$ in $\Hgr_2(\rips(X,q))$.
\end{proof}

Iterating the lemma, we get:

\begin{corollary}
\label{cor:H2-Rips-delta-hyperbolic}
Let $(X,d_X)$ be a $\delta$-hyperbolic geodesic space and let $q > p > 2\delta$. Then the map $\Hgr_2(\rips(X,p)) \to \Hgr_2(\rips(X,q))$ is surjective.
\end{corollary}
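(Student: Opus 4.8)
The plan is to deduce Corollary~\ref{cor:H2-Rips-delta-hyperbolic} from Lemma~\ref{lemma:H2-Rips-delta-hyperbolic} by iteration, exactly as Corollary~\ref{cor:H1-Rips-length-space} was deduced from Lemma~\ref{lemma:H1-Rips-path-metric-space}. The key observation is that Lemma~\ref{lemma:H2-Rips-delta-hyperbolic} provides, for every $q > 2\delta$, a surjection $\Hgr_2(\rips(X,\half q + \delta)) \to \Hgr_2(\rips(X,q))$, and the map $\half q + \delta$ has $q = 2\delta$ as its unique fixed point. So starting from $q$ and repeatedly applying the scaling $q \mapsto \half q + \delta$, the sequence $q_0 = q$, $q_{n+1} = \half q_n + \delta$ decreases monotonically to $2\delta$, hence eventually drops below any prescribed $p > 2\delta$.

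Concretely, I would first record that for $q > 2\delta$ the quantity $\half q + \delta$ satisfies $2\delta < \half q + \delta < q$; the left inequality is where $q > 2\delta$ is used, the right where $q > 2\delta$ gives $\half q > \delta$. Then I would define the iterates $q_n$ as above and note that $q_n - 2\delta = 2^{-n}(q - 2\delta) \to 0$, so there is an $n$ with $q_n \leq p$ (here it is convenient to first handle the trivial case $q_n = p$ or simply use strict inequality by choosing $n$ large). For such $n$, compose the chain of surjections
\[
\Hgr_2(\rips(X,q_n)) \to \Hgr_2(\rips(X,q_{n-1})) \to \cdots \to \Hgr_2(\rips(X,q_1)) \to \Hgr_2(\rips(X,q_0)) = \Hgr_2(\rips(X,q)),
\]
each arrow being surjective by Lemma~\ref{lemma:H2-Rips-delta-hyperbolic} applied at parameter $q_{i}$ (valid since each $q_i > 2\delta$). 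The composite $\Hgr_2(\rips(X,q_n)) \to \Hgr_2(\rips(X,q))$ is therefore surjective. Finally, since $q_n \leq p < q$, the inclusion-induced map factors as $\Hgr_2(\rips(X,q_n)) \to \Hgr_2(\rips(X,p)) \to \Hgr_2(\rips(X,q))$ by functoriality of homology under the inclusions $\rips(X,q_n) \subseteq \rips(X,p) \subseteq \rips(X,q)$, so the second factor $\Hgr_2(\rips(X,p)) \to \Hgr_2(\rips(X,q))$ must be surjective as well.

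There is no real obstacle here; the only thing to be a little careful about is the arithmetic of the iteration — making sure the fixed point $2\delta$ is correctly identified and that the strict inequalities $q_i > 2\delta$ persist along the whole finite chain so that Lemma~\ref{lemma:H2-Rips-delta-hyperbolic} applies at each step. One could also phrase the iteration slightly more loosely (replace $\half q + \delta$ by any value strictly between it and $q$, or between $p$ and $q$) to make the "eventually below $p$" step immediate; the cleanest is to observe $q_n \to 2\delta < p$ and pick $n$ with $q_n < p$. Everything else is the same bookkeeping as in Corollary~\ref{cor:H1-Rips-length-space}.
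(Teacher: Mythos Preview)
Your proposal is correct and follows exactly the paper's approach: iterate the map $f(t) = \half t + \delta$, observe that $f^n(q) \to 2\delta$ so eventually $f^n(q) < p$, and conclude surjectivity. You have simply spelled out more of the details (the explicit formula $q_n - 2\delta = 2^{-n}(q-2\delta)$ and the factorization through $\Hgr_2(\rips(X,p))$) than the paper's one-line proof does.
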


\begin{proof}
If $f(t) = \half t + \delta$ and $q > 2\delta$ then the iterates $f^n(q)$ are a decreasing sequence converging to~$2\delta$, so eventually $f^n(q) < p$.
\end{proof}

\begin{theorem}
\label{thm:H2-Rips-delta-hyperbolic}
Let $(X,d_X)$ be a $\delta$-hyperbolic geodesic space. Then the persistence diagram of $\Hgr_2(\Rips(X))$ is confined to the vertical strip $[0, 2\delta] \times [0,+\infty)$.
\end{theorem}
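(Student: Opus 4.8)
The plan is to deduce the theorem from Corollary~\ref{cor:H2-Rips-delta-hyperbolic} in exactly the way Theorem~\ref{thm:H1-Rips-length-space} is deduced from Corollary~\ref{cor:H1-Rips-length-space}. Working with the measure-theoretic persistence diagram of~\cite{cdsgo-sspm-12}, it is enough to show that the persistence measure $\mu$ of $\Hgr_2(\Rips(X))$ vanishes on every rectangle $R = [p,q]\times[r,s]$, with $-\infty \leq p < q < r < s \leq +\infty$, that does not meet the strip $[0,2\delta]\times[0,+\infty)$. The key observation is that any such rectangle satisfies either $q < 0$ or $p > 2\delta$, and I would treat these two cases separately.

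When $q < 0$, the complex $\rips(X,q)$ is just the vertex set, so $\Hgr_2(\rips(X,q)) = 0$ and
\[
\mu(R) = \langle \qoff{p}\qem\qon{q}\qem\qon{r}\qem\qoff{s}\rangle \leq \langle \qon{q}\rangle = \dim \Hgr_2(\rips(X,q)) = 0.
\]
When $p > 2\delta$, we have $q > p > 2\delta$, so Corollary~\ref{cor:H2-Rips-delta-hyperbolic} makes the map $\Hgr_2(\rips(X,p)) \to \Hgr_2(\rips(X,q))$ surjective, whence
\[
\mu(R) = \langle \qoff{p}\qem\qon{q}\qem\qon{r}\qem\qoff{s}\rangle \leq \langle \qoff{p}\qem\qon{q}\rangle = 0.
\]
Since $\mu$ is zero on every rectangle disjoint from the strip, the diagram lies inside the strip.

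The only step that needs care is the bookkeeping in the formalism of~\cite{cdsgo-sspm-12}: the two monotonicity estimates $\mu(R) \leq \langle\qon{q}\rangle$ and $\mu(R) \leq \langle\qoff{p}\qem\qon{q}\rangle$, together with the identification of $\langle\qoff{p}\qem\qon{q}\rangle$ with the dimension of the cokernel of $\Hgr_2(\rips(X,p)) \to \Hgr_2(\rips(X,q))$. This is formally identical to the $\Hgr_1$ case, so I would simply reuse that template; a reader who prefers to avoid the machinery can read Corollary~\ref{cor:H2-Rips-delta-hyperbolic} itself as the content of the theorem. All of the genuinely $\delta$-hyperbolic geometry has already been spent in Lemma~\ref{lemma:H2-Rips-delta-hyperbolic}, and this final deduction is purely formal. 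As with Theorem~\ref{thm:H1-Rips-length-space}, total boundedness of $X$ is not needed: the persistence measure is finite (indeed zero) away from the strip, so the diagram is defined and empty there.
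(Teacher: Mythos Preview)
Your proof is correct and follows the paper's argument essentially verbatim: the paper likewise reduces to showing $\mu(R)=0$ for rectangles off the strip, splits into the cases $q<0$ and $p>2\delta$, and disposes of them via the same two monotonicity estimates together with Corollary~\ref{cor:H2-Rips-delta-hyperbolic}. Your added remarks about the parallel with Theorem~\ref{thm:H1-Rips-length-space} and the irrelevance of total boundedness also mirror the paper's surrounding commentary.
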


\begin{proof}
It is enough to show that $\mu(R) = 0$ for any rectangle which does not meet the strip. Write $R = [p,q] \times [r,s]$, where $-\infty \leq p < q < r < s \leq +\infty$.
Either $q < 0$, in which case
\[
\mu(R)
=
\langle \qoff{p}\qem\qon{q}\qem\qon{r}\qem\qoff{s} \rangle
\leq \langle \qon{q} \rangle
= \dim \Hgr_2(\rips(X, q)) = 0
\]
because there are no triangles when $q < 0$. Or else $2\delta < p$, in which case
\[
\mu(R)
=
\langle \qoff{p}\qem\qon{q}\qem\qon{r}\qem\qoff{s} \rangle
\leq
\langle \qoff{p}\qem\qon{q} \rangle
= 0
\]
because $\Hgr_2(\rips(X,p)) \to \Hgr_2(\rips(X, q))$ is surjective.
\end{proof}

As with Theorem~\ref{thm:H1-Rips-length-space}, the result is essentially equivalent to the corollary that precedes it: there are no new $\Hgr_2$~classes once the Rips diameter exceeds~$2\delta$ so the persistence diagram is empty outside the vertical strip.
Our formal proof is written in the language of~\cite{cdsgo-sspm-12}, and again the reader unfamiliar with the concepts may safely skip it, and simply regard Theorem~\ref{thm:H2-Rips-delta-hyperbolic} as another way to state Corollary~\ref{cor:H2-Rips-delta-hyperbolic}.

\subsection*{Acknowledgements}
The authors thank Steve Smale for fruitful discussions that motivated the results Section~\ref{subsec:nonpersistent},
and J.-M. Droz for suggesting the idea of the proof of Proposition \ref{lemma:large-range-infinite-H1-rips}. 

The authors gratefully acknowledge the following funding sources for this work: Digit{e}o project C3TTA (including the Digit{e}o chair held by the second author); European project CG-Learning (EC contract No.~255827); ANR project GIGA (ANR-09-BLAN-0331-01); DARPA  project Sensor Topology and Minimal Planning `SToMP' (HR0011-07-1-0002). The second author is a 2013 Simons Fellow, and is supported in part by the Institute for Mathematics and its Applications with funds provided by the National Science Foundation.


\bibliographystyle{plain}
\bibliography{RipsBiblio}

\end{document}